\documentclass[12pt]{article}

\usepackage{epsfig,amsmath,amssymb,amsthm,latexsym, subfig}

%%%%%%%%%%%%%%%%%%%%%%%%%%%%%%%%%%%%%%%%%%%%%%%%%%%%%
% useful macros
%%%%%%%%%%%%%%%%%%%%%%%%%%%%%%%%%%%%%%%%%%%%%%%%%%%%%

\def\NN{\mathbb{N}}
\def\RR{\mathbb{R}}

\def\ZZ{\mathbb{Z}}

\def\wphi{\widehat{\phi}}

\def\ta{\tilde{a}}
\def\tb{\tilde{b}}
\def\tA{\tilde{A}}
\def\tB{\tilde{B}}

\def\tgamma{\tilde{\gamma}}

\newcommand{\bfa}{{\bf a}}
\newcommand{\bfb}{{\bf b}}

\newcommand{\bfc}{{\bf c}}

\newcommand{\crossarrow}{{\searrow \kern-1.0em \swarrow}}

\newtheorem{theorem}{Theorem}
\newtheorem{lemma}{Lemma}

\newtheorem{example}{Example}

%%%%%%%%%%%%%%%%%%%%%%%%%%%%%%%%%%%%%%%%%%%%%%%%%%%%%

\begin{document}

\title{Exact regularity of pseudo-splines}

\author{
Michael S. Floater\footnote{
   Centre of Mathematics for Applications, Department of Informatics,
   University of Oslo, PO Box 1053, Blindern, 0316 Oslo, Norway,
   {\it email: michaelf@ifi.uio.no}}
\ and Georg Muntingh\footnote{Centre of Mathematics for
Applications, Department of Mathematics, University of Oslo, PO
Box 1053, Blindern, 0316 Oslo, Norway,
{\it email: georgmu@math.uio.no}}
}
                                                                                
%\date{}
                                                                                
\maketitle

\begin{abstract}
In this paper we review and refine a
technique of Rioul to determine
the H{\"o}lder regularity of a large
class of symmetric subdivision schemes from the spectral radius of a single
matrix. These schemes include those of Dubuc and
Deslauriers, their dual versions, and more generally all the
pseudo-spline and dual pseudo-spline schemes.
We also derive various comparisons between their regularities
using the Fourier transform. In particular we show
that the regularity of the Dubuc-Deslauriers family
increases with the size of the mask.
\end{abstract}

\noindent {\em MSC: 65D10, 26A16}

\noindent {\em Keywords: } Subdivision, H\"older regularity, pseudo-splines.

%%%%%%%%%%%%%%%%%%%%%%%%%%%%%%%%%%%%%%%%%%%%%%%%%%%%%%%%%%%%%%%%%%%%%%%%%%%%
\section{Introduction}\label{sec:intro}
%%%%%%%%%%%%%%%%%%%%%%%%%%%%%%%%%%%%%%%%%%%%%%%%%%%%%%%%%%%%%%%%%%%%%%%%%%%%

Subdivision is a recursive method for generating curves, surfaces
and other geometric objects. Rather than having a complete
description of the object of interest at hand, subdivision generates
the object by repeatedly refining its description starting from
a coarse set of control points.
Since subdivision schemes are often easy to implement and very
flexible, they provide a powerful tool for modelling geometry.
However, analyzing their smoothness, or regularity, can be difficult.
The purpose of this paper is to review and refine a method proposed
by Rioul \cite{Rioul} to determine
the H{\"o}lder regularity of a surprisingly large
class of subdivision schemes from the spectral radius of a single
matrix. A joint spectral radius analysis is not required.

Consider the scheme
\begin{equation}\label{eq:scheme}
 f_{j+1,k} = \sum_\ell a_{k-2\ell} f_{j,\ell},
\end{equation}
with finitely supported mask
$\bfa = (a_k)_{k\in \ZZ}$, and coefficients $a_k \in \RR$,
acting on the initial data
$f_{0,k} \in \RR$, $k \in \ZZ$.
At each subdivision level $j \ge 0$,
let $f_j$ be the piecewise linear function
with value $f_{j,k}$ at the point $2^{-j}k$. The
scheme is \emph{convergent} if it has a pointwise limit
$f := \lim_{j\to \infty} f_j$.
We assume that only
a finite number of the initial data $f_{0,k}$ are non-zero, in
which case $f$ has compact support.
In the special case of the cardinal data $f_{0,k} = \delta_{k,0}$,
the support of the limit $f$ is the interval $[K,L]$ if
$a_K,a_L \ne 0$ and $a_k = 0$ for all $k<K$ and $k > L$.
Note that shifting the $a_k$ merely shifts $f$.

The Laurent polynomial
$$
a(z) = \sum_k a_k z^k
$$
is the \emph{symbol} of the scheme.
It is well known \cite{DL},
that a necessary condition for convergence of (\ref{eq:scheme})
is that
$$
 \sum_k a_{2k} = \sum_k a_{2k+1} = 1,
$$
and so we will make this assumption.
This condition can be expressed in terms of the symbol as
\begin{equation}\label{eq:condition2}
 a(-1) = 0 \qquad \hbox{and} \qquad a(1) = 2. 
\end{equation}

Let us now suppose, after shifting the coefficients $a_k$ as necessary,
that $a(z)$ can be factorized as
\begin{equation}\label{eq:SymbolDerived}
a(z) = 2^{-r} (1+z)^{r+1} b(z)
\end{equation}
for some $r \ge 0$, and that
$\bfb = (b_k)_{k\in \ZZ}$,
the mask corresponding to $b(z)$,
is symmetric about $b_0$, i.e., $b_k = b_{-k}$.
Then the Fourier transform of $\bfb$,
$$
 B(\xi) := b(e^{-i\xi}) = \sum_k b_k e^{-ik\xi}, \qquad \xi \in \RR,
$$
is both periodic with period $2\pi$ and real.

Rioul showed in \cite{Rioul} that a lower bound on the 
H{\"o}lder regularity of (\ref{eq:scheme}), defined below,
can be determined from the spectral radius of a \emph{single matrix}
if $B \ge 0$, i.e., if $B(\xi) \ge 0$
for all $\xi \in [-\pi,\pi]$.
A surprisingly large class of schemes are of this type.
For example, one can easily check that they include
all the pseudo-spline schemes, both primal and dual
\cite{DHRS,DS,DS2,DHSS,DDH},
from explicit formulas for $a(z)$.

Rioul further showed that in the special case that
the scheme (\ref{eq:scheme}) is interpolatory, the lower bound is optimal.
We will show more: that the lower bound is optimal whenever
the cardinal function of the scheme
has $\ell^\infty$-stable integer translates.
Using a characterization of such stability due to Jia and Micchelli
\cite{JM}, this leads us to conclude that the lower bound is optimal
under the slightly stricter condition that $B > 0$.
Such schemes include again all the
pseudo-spline and dual pseudo-spline schemes.

We apply these results to compute
and tabulate the regularity of the pseudo-spline schemes,
primal and dual, for low orders.
We then obtain new information about these regularities:
by making pointwise comparisons between the Fourier transforms
of two schemes, we derive inequalities on their regularities.
As an example, we show that the regularity of the Dubuc-Deslauriers
scheme \cite{Dubuc,DD} increases with the size of the mask.

%%%%%%%%%%%%%%%%%%%%%%%%%%%%%%%%%%%%%%%%%%%%%%%%%%%%%%%%%%%%%%%%%%%%%%%%%%%%
\section{Regularity}\label{sec:reg}
%%%%%%%%%%%%%%%%%%%%%%%%%%%%%%%%%%%%%%%%%%%%%%%%%%%%%%%%%%%%%%%%%%%%%%%%%%%%

The limit function $f$ has
\emph{H\"older regularity} $\alpha$, $0 < \alpha < 1$,
written $f \in C^\alpha$, if
\[ |f(x) - f(y)| \leq C |x - y|^\alpha \]
for all $x, y \in \RR$, and we write $f \in C^{q+\alpha}$
for $q \in \NN_0$, $0 < \alpha < 1$,
if $f \in C^q$, i.e., $f$ is $q$ times continuously differentiable,
and $f^{(q)} \in C^\alpha$.
Correspondingly, we shall
say that the scheme \eqref{eq:scheme} has
H\"older regularity $\gamma$ for some real $\gamma \ge 0$,
if, for $\beta < \gamma$,
$f \in C^\beta$ for all initial data,
and, for $\beta > \gamma$, $f \not \in C^\beta$
for some initial data.

The regularity of $f$ is related to the behaviour of
the divided differences of the scheme. For each integer $s \ge 0$,
let $f_{j,k}^{[s]}$ denote the divided difference of the values
$f_{j,k-s},\ldots,f_{j,k}$ at the corresponding dyadic points
$2^{-j}(k-s),\ldots,2^{-j}k$. Then
$f_{j,k}^{[0]} = f_{j,k}$ and for $s \ge 1$,
\begin{equation}\label{eq:recurse}
   f_{j,k}^{[s]} = \frac{2^j}{s} (f_{j,k}^{[s-1]} - f_{j,k-1}^{[s-1]}).
\end{equation}
Under conditions (\ref{eq:condition2}) and (\ref{eq:SymbolDerived}),
there is a scheme for the $f_{j,k}^{[s]}$ for $s=0,1,\ldots,r+1$.
For such $s$, if we define the associated Laurent polynomial as
$$
 f_j^{[s]}(z) = \sum_k f_{j,k}^{[s]} z^k,
$$
then
$$
 f_{j+1}^{[s]}(z) = a^{[s]}(z) f_j^{[s]}(z^2),
$$
where
$$ a^{[s]}(z) = \frac{2^s}{(1+z)^s} a(z), $$
from which we obtain the derived scheme
\begin{equation}\label{eq:derivedschemea}
 f_{j+1,k}^{[s]} = \sum_\ell a_{k-2\ell}^{[s]} f_{j,\ell}^{[s]}.
\end{equation}

Then, with
$$ g_{j,k}^{[r]} := f_{j,k}^{[r]} - f_{j,k-1}^{[r]}, $$
it can be shown \cite{DL} that if
\begin{equation}\label{eq:growth}
 |g_{j,k}^{[r]}| \le C\lambda^j,
\end{equation}
for some constants $C$ and $\lambda < 1$, for large enough $j$,
then $f \in C^r$.
Moreover, if $1/2 < \lambda < 1$, then
$f \in C^{r - \log_2(\lambda)}$.

%%%%%%%%%%%%%%%%%%%%%%%%%%%%%%%%%%%%%%%%%%%%%%%%%%%%%%%%%%%%%%%%%%%%%%%%%%%%
\section{Reduction procedure}\label{sec:reduction}
%%%%%%%%%%%%%%%%%%%%%%%%%%%%%%%%%%%%%%%%%%%%%%%%%%%%%%%%%%%%%%%%%%%%%%%%%%%%

How can we use (\ref{eq:growth}) in the case that it holds with
$\lambda \ge 1$? Then we do not know whether $f \in C^r$, but if
$r \ge 1$ we
can use the `reduction procedure' of Daubechies,
Guskov, and Sweldens \cite{DGS} to
obtain information about lower order derivatives.
Although the procedure was shown to work for
interpolatory schemes in \cite{DGS},
it also applies to the more general scheme (\ref{eq:scheme}).

\begin{lemma}
Suppose (\ref{eq:SymbolDerived}) holds for some $r \ge 1$.
If (\ref{eq:growth}) holds with $\lambda > 1$ then
$$
 |g_{j,k}^{[r-1]}| \le D_1 2^{-j} \lambda^j,
$$
while if it holds with $\lambda = 1$,
$$
 |g_{j,k}^{[r-1]}| \le (D_2 + D_3j) 2^{-j},
$$
for constants $D_1, D_2, D_3$.
\end{lemma}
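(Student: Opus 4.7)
The plan is to convert the hypothesis, which controls the forward differences $g^{[r]}$, into a bound on the values $f^{[r]}$ themselves, and then translate back to $g^{[r-1]}$ via (\ref{eq:recurse}). Applying (\ref{eq:recurse}) with $s=r$ gives $f^{[r]}_{j,k} = (2^j/r)\, g^{[r-1]}_{j,k}$, so it suffices to prove $|f^{[r]}_{j,k}| \le C' \lambda^j$ when $\lambda>1$ and $|f^{[r]}_{j,k}| \le C'' + C''' j$ when $\lambda=1$; multiplying by $r/2^j$ then yields the two bounds claimed in the lemma.

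To bound $f^{[r]}_{j,k}$ I would work with the derived scheme (\ref{eq:derivedschemea}) for $s=r$. By (\ref{eq:SymbolDerived}) its symbol is $a^{[r]}(z) = (1+z)b(z)$, so $a^{[r]}(-1) = 0$ and $a^{[r]}(1) = 2b(1) = 2$ (the latter using $a(1)=2$). In particular $\sum_\ell a^{[r]}_{k-2\ell} = 1$ for every $k$, so (\ref{eq:derivedschemea}) can be rewritten as
$$
f^{[r]}_{j+1,k} - f^{[r]}_{j,k'} = \sum_\ell a^{[r]}_{k-2\ell}\bigl(f^{[r]}_{j,\ell} - f^{[r]}_{j,k'}\bigr)
$$
for an arbitrary reference index $k'$. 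I would choose $k' = k'(k)$ within bounded distance of $k/2$ so that whenever $a^{[r]}_{k-2\ell}\ne 0$ the index $\ell$ lies within some fixed distance $M$ of $k'$, with $M$ depending only on the support of $\bfa^{[r]}$. Each difference $f^{[r]}_{j,\ell} - f^{[r]}_{j,k'}$ is then a telescoping sum of at most $M$ consecutive values of $g^{[r]}_{j,\cdot}$, each bounded by $C\lambda^j$, and combined with $\|\bfa^{[r]}\|_1 < \infty$ this yields
$$
|f^{[r]}_{j+1,k} - f^{[r]}_{j,k'}| \le C_1 \lambda^j
$$
for a constant $C_1$ independent of $j$ and $k$.

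The final step is to iterate: select a chain of parents $k\mapsto k' \mapsto k'' \mapsto \cdots \mapsto k_0$ descending to level $0$ and apply the above estimate at each level. Since $f^{[r]}_{0,k_0}$ is uniformly bounded by the finitely supported initial data, summing the telescoped inequalities gives
$$
|f^{[r]}_{j,k}| \le |f^{[r]}_{0,k_0}| + C_1\sum_{i=0}^{j-1}\lambda^i,
$$
which is $\le C_2 \lambda^j$ when $\lambda>1$ (geometric series) and $\le C_2 + C_1 j$ when $\lambda=1$. The step I expect to require the most care is the perturbation estimate of the second paragraph, namely checking that the reference $k'(k)$ can be chosen uniformly in $k$ so that the bounded-support argument really yields $C_1$ independent of both $j$ and $k$; once that is in place, the remaining iteration is simply a geometric (or, when $\lambda=1$, arithmetic) summation.
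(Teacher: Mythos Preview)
Your proposal is correct and takes essentially the same approach as the paper. The paper likewise uses $a^{[r]}(1)=2$ and $a^{[r]}(-1)=0$ to obtain the one-step bound $|f_{j+1,2k+s}^{[r]}-f_{j,k}^{[r]}|\le C_1\max_m|g_{j,m}^{[r]}|$ (phrased as ``summation by parts'' rather than your subtract-a-reference-and-telescope, but equivalent), then iterates along the binary parent chain $k\mapsto\lfloor k/2\rfloor$ down to level~$0$ and finishes with the same geometric/arithmetic sum and the relation $g_{j,k}^{[r-1]}=r\,2^{-j}f_{j,k}^{[r]}$.
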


\begin{proof}
By the divisibility assumption, $a^{[r]}(-1) = 0$, and
by the assumption that $a(1) = 2$ in
(\ref{eq:condition2}), it also follows that
$a^{[r]}(1) = 2$. Therefore,
$$
 \sum_k a_{2k}^{[r]} = \sum_k a_{2k+1}^{[r]} = 1,
$$
and from \eqref{eq:derivedschemea} using summation by parts
there is a constant $C_1$ such that
$$ |f_{j+1,2k+s}^{[r]} - f_{j,k}^{[r]}| \le
   C_1 \max_k |g_{j,k}^{[r]}|, \qquad s=0,1. $$
So, for any $j \ge 1$, if we represent any $k \in \ZZ$ in binary form
as $k = k_j$, where
$$ k_\ell = 2 k_{\ell-1} + s_\ell, \qquad \ell=j,j-1,\ldots,1, $$
for some $k_0 \in \ZZ$ and $s_1,\ldots,s_j \in \{0,1\}$, then
$$ |f_{j,k}^{[r]} - f_{0,k_0}^{[r]}|
  \le \sum_{\ell=1}^j | f_{\ell,k_\ell}^{[r]} - f_{\ell-1,k_{\ell-1}}^{[r]} |
 \le C_1 C (1 + \lambda + \cdots + \lambda^{j-1}).
$$
Hence,
$$
  |f_{j,k}^{[r]}| \le C_2 + C_1 C (1 + \lambda + \cdots + \lambda^{j-1}),
$$
and since
$$ g_{j,k}^{[r-1]} = 2^{-j} r f_{j,k}^{[r]}, $$
this gives the result in the two cases $\lambda > 1$
and $\lambda = 1$.
\end{proof}

By applying this procedure recursively, it follows that
if (\ref{eq:growth}) holds for any $\lambda$ with
$1/2 < \lambda < 2^r$ then
$f \in C^{r-\log_2\lambda}$ if $\log_2\lambda$ is not an integer,
and
$f \in C^{r-\log_2\lambda - \epsilon}$ for any small $\epsilon > 0$
if $\log_2\lambda$ is an integer.

%%%%%%%%%%%%%%%%%%%%%%%%%%%%%%%%%%%%%%%%%%%%%%%%%%%%%%%%%%%%%%%%%%%%%%%%%%%%
\section{Rioul's method}\label{sec:rm}
%%%%%%%%%%%%%%%%%%%%%%%%%%%%%%%%%%%%%%%%%%%%%%%%%%%%%%%%%%%%%%%%%%%%%%%%%%%%

With $r$ in (\ref{eq:SymbolDerived}) now fixed, 
let $g_{j,k} = g_{j,k}^{[r]}$ and
$g_j(z) = \sum_k g_{j,k} z^k$.
Then
\begin{equation}\label{eq:derivedschemeL}
 g_{j+1}(z) = b(z) g_j(z^2),
\end{equation}
with $b(z)$ as in (\ref{eq:SymbolDerived}),
or equivalently,
\begin{equation}\label{eq:derivedscheme}
 g_{j+1,k} = \sum_\ell b_{k-2\ell} g_{j,\ell}.
\end{equation}
Suppose now that $\bfb$ is symmetric, and therefore,
after shifting the coefficients as necessary, it has the form
\begin{equation}\label{eq:bform}
 \bfb = (b_p, \ldots, b_1, b_0, b_1,\ldots, b_p),
  \qquad b_p\neq 0,
\end{equation}
for some $p \ge 0$, in which case
$$
 B(\xi) = b_0 + 2 \sum_{k=1}^{p} b_k \cos(k\xi).
$$
If $p=0$ we must have $b_0 = 1$ and so we can take
$\lambda = 1$. In this case the scheme
(\ref{eq:scheme}) is the B-spline scheme of degree $r$ and
this merely confirms the well-known fact that the limit $f$,
being a spline of degree $r$, belongs to $C^\beta$ for
any $\beta < r$.
Thus, we assume from now on that $p \ge 1$.

Iterating \eqref{eq:derivedschemeL} gives
\begin{equation}\label{eq:Gj}
 g_j(z) = b_j(z) g_0(z^{2^j}),
\end{equation}
where
\begin{equation}\label{eq:bj}
 b_j(z) := b(z) b(z^2) \cdots b(z^{2^{j-1}}).
\end{equation}
But then
\begin{equation}\label{eq:bj1}
 b_{j+1}(z) = b(z) b_j(z^2),
\end{equation}
and so $b_j(z)$ is the Laurent polynomial of
the data $b_{j,k}$, where $b_{0,k} = \delta_{k,0}$ and
\begin{equation}\label{eq:schemeb}
 b_{j+1,k} = \sum_\ell b_{k-2\ell} b_{j,\ell}.
\end{equation}
In particular, $b_{1,k} = b_k$.
Since (\ref{eq:Gj}) can be written as
\begin{equation}\label{eq:gb}
 g_{j,k} = \sum_\ell b_{j,k-2^j\ell} g_{0,\ell},
\end{equation}
it follows that
$$ |g_{j,k}| \le \max_m |b_{j,m}| \sum_\ell |g_{0,\ell}|, $$
and so (\ref{eq:growth}) holds if there is some constant $C'$ such that
\begin{equation*}%\label{eq:growth2}
 \max_k |b_{j,k}| \le C' \lambda^j.
\end{equation*}

By induction on $j$, the values $b_{j,k}$ are zero whenever
$k < -p_j$ or $k > p_j$, where $p_j := (2^j-1)p$.

\begin{lemma}[Rioul]\label{lem:rioul}
If $b(z)$ in (\ref{eq:SymbolDerived}) has the form (\ref{eq:bform}) and
$B \ge 0$ then
$$ \max_k |b_{j,k}| = b_{j,0} \qquad \text{for all } j \ge 0. $$
\end{lemma}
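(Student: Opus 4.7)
The plan is to prove this by reinterpreting the coefficients $b_{j,k}$ as Fourier coefficients of an explicit nonnegative function, and then invoking the elementary fact that the zeroth Fourier coefficient of a nonnegative function dominates the moduli of all the others.

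First I would translate the recursion (\ref{eq:bj}) into the Fourier side. Setting $z = e^{-i\xi}$ and writing $B_j(\xi) := b_j(e^{-i\xi})$, the identity (\ref{eq:bj}) becomes
$$
B_j(\xi) = B(\xi)\, B(2\xi) \cdots B(2^{j-1}\xi), \qquad j \ge 1,
$$
with $B_0 \equiv 1$. By construction $B_j$ is $2\pi$-periodic and, since each factor $B(2^i\xi)$ is nonnegative by hypothesis, we have $B_j(\xi) \ge 0$ for every $\xi \in \RR$.

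Next I would recover the $b_{j,k}$ as Fourier coefficients. Because $b_j(z) = \sum_k b_{j,k} z^k$ is a Laurent polynomial, we have $B_j(\xi) = \sum_k b_{j,k} e^{-ik\xi}$, and the orthogonality of the exponentials yields
$$
b_{j,k} = \frac{1}{2\pi} \int_{-\pi}^{\pi} B_j(\xi)\, e^{ik\xi}\, d\xi.
$$
In particular, $b_{j,0} = \frac{1}{2\pi}\int_{-\pi}^{\pi} B_j(\xi)\, d\xi$, which is nonnegative since $B_j \ge 0$.

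The conclusion then drops out of the triangle inequality: for any $k$,
$$
|b_{j,k}| \;\le\; \frac{1}{2\pi} \int_{-\pi}^{\pi} \bigl|B_j(\xi)\, e^{ik\xi}\bigr|\, d\xi \;=\; \frac{1}{2\pi} \int_{-\pi}^{\pi} B_j(\xi)\, d\xi \;=\; b_{j,0},
$$
where in the middle equality I use $|e^{ik\xi}| = 1$ together with $B_j \ge 0$. Combined with $b_{j,0} \ge 0$, this gives $\max_k |b_{j,k}| = b_{j,0}$, as required. There is essentially no obstacle here beyond correctly setting up the Fourier picture; the nontrivial content of the lemma is the observation that the hypothesis $B \ge 0$ is preserved under the dilation-and-multiplication structure of (\ref{eq:bj}), which is what makes the single-matrix spectral radius analysis in the following sections possible.
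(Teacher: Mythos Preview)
Your proof is correct and follows essentially the same route as the paper's: both arguments interpret $b_{j,k}$ as Fourier coefficients of $B_j$, show $B_j \ge 0$, and then use $|b_{j,k}| \le \frac{1}{2\pi}\int_{-\pi}^{\pi} B_j(\xi)\,d\xi = b_{j,0}$. The only cosmetic difference is that you invoke the product formula $B_j(\xi) = \prod_{i=0}^{j-1} B(2^i\xi)$ directly from (\ref{eq:bj}) to get nonnegativity in one line, whereas the paper argues by induction on $j$ via (\ref{eq:bj1}) and separately notes the symmetry of the $b_{j,k}$ to conclude $B_j$ is real.
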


\begin{proof}
Since
$$ B_j(\xi) := \sum_{k=-p_j}^{p_j} b_{j,k} e^{-ik\xi} $$
is a Fourier series, we have
$$ b_{j,k} = \frac{1}{2\pi} 
     \int_{-\pi}^{\pi} B_j(\xi) e^{ik\xi} \, d\xi, $$
and therefore
$$ |b_{j,k}| \le \frac{1}{2\pi} 
     \int_{-\pi}^{\pi} |B_j(\xi)| \, d\xi. $$
By the symmetry of the $b_k$, it follows from (\ref{eq:schemeb})
by induction on $j$ that
the $b_{j,k}$ are symmetric for all $j$, i.e., $b_{j,-k} = b_{j,k}$.
Therefore, $B_j$ is real,
and by induction on $j$ from (\ref{eq:bj1}), $B_j(\xi) \ge 0$
for all $\xi$, and it follows that
\[ |b_{j,k}| \le \frac{1}{2\pi} 
     \int_{-\pi}^{\pi} B_j(\xi) \, d\xi = b_{j,0}. \qedhere \]
\end{proof}

%%%%%%%%%%%%%%%%%%%%%%%%%%%%%%%%%%%%%%%%%%%%%%%%%%%%%%%%%%%%%%%%%%%%%%%%%%%%
\section{Spectral radius}\label{sec:spectral}
%%%%%%%%%%%%%%%%%%%%%%%%%%%%%%%%%%%%%%%%%%%%%%%%%%%%%%%%%%%%%%%%%%%%%%%%%%%%

Under the assumption of Lemma~\ref{lem:rioul} it follows that
(\ref{eq:growth}) holds if
$$
 b_{j,0} \le C \lambda^j
$$
for large enough $j$. One way to determine such $\lambda$
is to study the vector of coefficients
$$ \bfb_j = (b_{j,-p+1},\ldots,b_{j,p-1})^T, $$
since it includes the central coefficient $b_{j,0}$ and is
self-generating in the sense that
$\bfb_{j+1} = M \bfb_j$, where, from (\ref{eq:schemeb}),
$M$ is the matrix of dimension $2p-1$ defined by
\begin{equation}\label{eq:Mlarge}
 M = (b_{k-2\ell})_{k,\ell=-p+1,\ldots,p-1}.
\end{equation}
The first few examples of $M$, with $p=1,2,3$, are
$$
\left[\begin{matrix}
  b_0 
     \end{matrix}\right], \quad
 \left[\begin{matrix}
  b_1    & b_{-1} & 0 \\
  b_2    & b_0 & b_{-2} \\
  0    & b_1 & b_{-1}
     \end{matrix}\right], \quad
 \left[\begin{matrix}
  b_2    & b_0 & b_{-2} & 0      & 0      \\
  b_3    & b_1 & b_{-1} & b_{-3} & 0      \\
  0      & b_2 & b_0    & b_{-2} & 0      \\
  0      & b_3 & b_1    & b_{-1} & b_{-3} \\
  0      & 0   & b_2    & b_0    & b_{-2}
     \end{matrix}\right].
$$

\begin{theorem}\label{thm:MainTheorem}
If $B \ge 0$ then
\begin{equation}\label{eq:bj0lim}
 \lim_{j \to \infty} b_{j,0}^{1/j} = \rho,
\end{equation}
where $\rho$ is the spectral radius of $M$, and if $\rho \ge 1/2$,
a lower bound for the regularity of the scheme \eqref{eq:scheme} is
$r - \log_2(\rho)$.
\end{theorem}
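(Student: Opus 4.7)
The plan is to show that $b_{j,0}$ and $\|M^j\|_\infty$ differ only by a bounded multiplicative factor, extract the common exponential rate via Gelfand's formula, and then feed the resulting decay estimate into the reduction procedure of Section~\ref{sec:reduction}. Since the cardinal initial data yields $\bfb_0$ equal to the standard coordinate vector at the central index $0$, iterating $\bfb_{j+1}=M\bfb_j$ gives $\bfb_j = M^j\bfb_0$, so $b_{j,0}=(M^j)_{0,0}$; by Lemma~\ref{lem:rioul} this also equals $\|\bfb_j\|_\infty$. The trivial inequality $b_{j,0}\le\|M^j\|_\infty$ together with Gelfand's formula $\lim\|M^j\|_\infty^{1/j}=\rho$ already yields $\limsup_j b_{j,0}^{1/j}\le\rho$.

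For the matching lower bound, I would argue that every entry of $M^j$ is controlled by $b_{j,0}$, not just the central one. The scheme~(\ref{eq:schemeb}) is translation-equivariant --- replacing $\delta_{k,0}$ by $\delta_{k,\ell}$ as initial data shifts the level-$j$ sequence to $b_{j,k-2^j\ell}$ --- and because the recursion (\ref{eq:schemeb}) restricted to $k\in[-p+1,p-1]$ only draws on indices $\ell$ in that same block, the matrix $M$ faithfully records the scheme there even when started from a non-central coordinate vector. Consequently
\[
   (M^j)_{k,\ell} \,=\, b_{j,k-2^j\ell}, \qquad k,\ell\in[-p+1,p-1],
\]
so Lemma~\ref{lem:rioul} gives $|(M^j)_{k,\ell}|\le b_{j,0}$ and hence $\|M^j\|_\infty\le(2p-1)\,b_{j,0}$. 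Combined with the reverse bound, Gelfand's formula forces $\lim_j b_{j,0}^{1/j}=\rho$.

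For the regularity, feeding (\ref{eq:bj0lim}) into (\ref{eq:gb}) and Lemma~\ref{lem:rioul} gives
\[
   |g_{j,k}| \,\le\, \max_m|b_{j,m}|\sum_\ell|g_{0,\ell}| \,=\, C\,b_{j,0} \,\le\, C_\epsilon(\rho+\epsilon)^j
\]
for every $\epsilon>0$. When $\rho\ge1/2$ and $\epsilon$ is small enough that $1/2<\rho+\epsilon<2^r$, the reduction procedure of Section~\ref{sec:reduction} yields $f\in C^{r-\log_2(\rho+\epsilon)}$ for every choice of initial data, and letting $\epsilon\downarrow 0$ delivers the advertised lower bound $r-\log_2\rho$ on the H\"older regularity. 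The main technical obstacle I anticipate is the identification of the off-centre columns of $M^j$ with shifts of the cardinal sequence: one has to verify that truncating the infinite-dimensional iteration to the central block does not corrupt the computation when the starting vector is not $\bfb_0$, after which Rioul's Lemma instantly controls the full matrix $M^j$ by its single central entry.
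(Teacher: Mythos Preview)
Your proposal is correct and follows essentially the same route as the paper: both sandwich $b_{j,0}$ between $\|M^j\|_\infty$ and $(2p-1)^{-1}\|M^j\|_\infty$ via the identification $(M^j)_{k,\ell}=b_{j,k-2^j\ell}$ together with Lemma~\ref{lem:rioul}, and then invoke Gelfand's formula. The paper dispatches your anticipated ``technical obstacle'' in one line by citing \eqref{eq:gb}, and its regularity conclusion is phrased more tersely (simply that \eqref{eq:growth} holds for every $\lambda>\rho$), but the substance is identical.
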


\begin{proof}
Since $\bfb_j = M^j \bfb_0$,
$$ b_{j,0} = \Vert \bfb_j \Vert_\infty
   \le \Vert M^j \Vert_\infty \Vert \bfb_0 \Vert_\infty
   = \Vert M^j \Vert_\infty. $$
On the other hand, from equation (\ref{eq:gb}),
$$ M^j = (b_{j,k-2^j\ell})_{k,\ell=-p+1,\ldots,p-1}, $$
and so
$$ \Vert M^j \Vert_\infty \le (2p-1) \max_k |b_{j,k}|
                          = (2p-1) b_{j,0}. $$
Therefore,
$$ (2p-1)^{-1/j} \Vert M^j \Vert_\infty^{1/j}
    \le b_{j,0}^{1/j} \le \Vert M^j \Vert_\infty^{1/j}, $$
and letting $j\to \infty$ proves (\ref{eq:bj0lim}). 
It follows from (\ref{eq:bj0lim}) that
(\ref{eq:growth}) holds with $C = 1$ for any $\lambda > \rho$,
and this proves the lower bound on the regularity of the scheme.
\end{proof}

%%%%%%%%%%%%%%%%%%%%%%%%%%%%%%%%%%%%%%%%%%%%%%%%%%%%%%%%%%%%%%%%%%%%%%%%%%%%
\section{Alternative matrices}\label{sec:folded}
%%%%%%%%%%%%%%%%%%%%%%%%%%%%%%%%%%%%%%%%%%%%%%%%%%%%%%%%%%%%%%%%%%%%%%%%%%%%

Due to the assumption that $\bfb$ is symmetric
we can compute $\rho$ in (\ref{eq:bj0lim})
as the spectral radius of a matrix of roughly half the size of $M$,
namely of dimension~$p$ instead of $2p-1$.
Since $b_{j,-k} = b_{j,k}$, the
vector of coefficients
$$ \bfb_j = (b_{j,0},b_{j,1},\ldots,b_{j,p-1})^T, $$
also includes $b_{j,0}$ and is self-generating.
Indeed, from (\ref{eq:schemeb}),
$$ b_{j+1,k} = b_k b_{j,0} + \sum_{\ell \ge 1} 
         (b_{k-2\ell} + b_{k+2\ell}) b_{j,\ell}, $$
and using the fact that $b_{-k} = b_k$ implies
$$ b_{j+1,k} = b_k b_{j,0} + \sum_{\ell \ge 1} 
         (b_{|k-2\ell|} + b_{k+2\ell}) b_{j,\ell}, $$
and it follows that $\bfb_{j+1} = M \bfb_j$,
where $M$ is the matrix of dimension $p$,
\begin{equation}\label{eq:Msmall}
M = (m_{k,\ell})_{k,\ell=0,\ldots,p-1},
\qquad
m_{k,\ell} = \left\{\begin{array}{ll}
b_k, & \ell = 0;\\
b_{|k-2\ell|} + b_{k+2\ell}, & \ell \ge 1,
\end{array} \right.
\end{equation}
For $p=1,2,3,4$, this `folded' matrix is
$$
\left[\begin{matrix}
  b_0 
     \end{matrix}\right], \quad
 \left[\begin{matrix}
  b_0    & 2b_2 \\
  b_1    & b_1
     \end{matrix}\right], \quad
 \left[\begin{matrix}
    b_0    & 2b_2 & 0      \\
    b_1    & b_1+b_3 & b_3 \\
    b_2    & b_0    & b_2
     \end{matrix}\right], \quad
 \left[\begin{matrix}
    b_0    & 2b_2    & 2b_4 & 0 \\
    b_1    & b_1+b_3 & b_3  & 0 \\
    b_2    & b_0+b_4 & b_2 & b_4 \\
    b_3    & b_1     & b_1 & b_3
     \end{matrix}\right].
$$

Rioul obtained $\rho$ in (\ref{eq:bj0lim})
in two alternative, but equivalent ways,
working from an alternative to (\ref{eq:bj1}).
From (\ref{eq:bj}) there is another recursion:
$$ b_{j+1}(z) = b(z^{2^j}) b_j(z), $$
which gives
$$
 b_{j+1,k} = \sum_\ell b_\ell b_{j,k-2^j\ell}.
$$
This means that the subset of coefficients $b_{j,k}$ whose indices increase
in steps of $2^j$, rather than 1, i.e., $c_{j,k} := b_{j,2^jk}$,
satisfy the recursion
\begin{equation}\label{eq:schemec}
 c_{j+1,k} = \sum_\ell b_\ell c_{j,2k-\ell}
           = \sum_\ell b_{2k-\ell} c_{j,\ell}.
\end{equation}
It follows that the vector
$$ \bfc_j = (c_{j,-p+1},\ldots,c_{j,p-1})^T $$
is self-generating and includes $b_{j,0}$ because
$b_{j,0} = c_{j,0}$, and so we deduce that
$\rho$ in (\ref{eq:bj0lim}) is also the spectral radius of the
matrix $N$ of dimension $2p-1$ where
$\bfc_{j+1} = N \bfc_j$. However, by comparing (\ref{eq:schemec})
with (\ref{eq:schemeb}) we see that $N$ is simply
the transpose of $M$ in (\ref{eq:Mlarge}) and so
these two approaches to computing $\rho$ are equivalent. Rioul computed
$\rho$ in \eqref{eq:bj0lim} from a folded version of
$N$ of dimension $p$, analogous to $M$ in (\ref{eq:Msmall}), using
the reduced vector
$$ \bfc_j = (c_{j,0},\ldots,c_{j,p-1})^T. $$

Theorem~\ref{thm:MainTheorem} holds with $M$ replaced
by each of these alternative matrices, the proof being similar.

%%%%%%%%%%%%%%%%%%%%%%%%%%%%%%%%%%%%%%%%%%%%%%%%%%%%%%%%%%%%%%%%%%%%%%%%%%%%
\section{Optimality}\label{sec:optimality}
%%%%%%%%%%%%%%%%%%%%%%%%%%%%%%%%%%%%%%%%%%%%%%%%%%%%%%%%%%%%%%%%%%%%%%%%%%%%

In this section we show that under a slightly stricter condition,
the lower bound on the regularity of Theorem~\ref{thm:MainTheorem}
is optimal.

\begin{theorem}\label{thm:MainTheoremStrict}
If $B > 0$ the lower bound of
Theorem~\ref{thm:MainTheorem} is optimal.
\end{theorem}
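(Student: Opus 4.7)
The plan is to prove optimality in two stages: first, to show that $\ell^\infty$-stability of the integer translates of the cardinal function $\phi$ (the scheme limit arising from the initial data $f_{0,k} = \delta_{k,0}$) forces the lower bound of Theorem~\ref{thm:MainTheorem} to be sharp; then, to derive this stability from $B > 0$ via the characterization of Jia and Micchelli~\cite{JM}.

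For the first stage, note that since $\phi$ is itself a particular limit produced by the scheme, the scheme's regularity $\gamma$ satisfies $\gamma \le $ regularity of $\phi$, so it suffices to show $\phi \notin C^{r - \log_2 \rho + \varepsilon}$ for every $\varepsilon > 0$. Lemma~\ref{lem:rioul} and Theorem~\ref{thm:MainTheorem} give $b_{j,0} = \max_k |b_{j,k}|$ with $b_{j,0}^{1/j} \to \rho$. Propagating this through the linear derived scheme (\ref{eq:derivedscheme}), one shows that, for the cardinal initial data, $\max_k |g_{j,k}^{[r]}|$ grows at the geometric rate $\rho^j$. Were $\phi^{(r)}$ H\"older of some exponent $\alpha > -\log_2 \rho$, the discrete $r$-th divided differences would satisfy $|g_{j,k}^{[r]}| = O(2^{-j\alpha})$, contradicting this rate since $2^{-\alpha} < \rho$. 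The $\ell^\infty$-stability of $\phi$'s integer translates is the bridge guaranteeing that small discrete divided differences must come from genuine smoothness of $\phi$ rather than from cancellation within the refinement relation.

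For the second stage, Jia and Micchelli~\cite{JM} characterize $\ell^\infty$-stability by the condition that the shifts $\{\hat\phi(\xi + 2\pi\ell) : \ell \in \ZZ\}$ have no common zero in $\xi \in \RR$. Iterating the refinement equation and using the factorization (\ref{eq:SymbolDerived}) yields
\[
  \hat\phi(\xi) = \left(\frac{1-e^{-i\xi}}{i\xi}\right)^{r+1} \prod_{k=1}^\infty B(\xi/2^k).
\]
Under $B > 0$ the infinite product never vanishes, while the B-spline prefactor has zeros only at nonzero integer multiples of $2\pi$. A short case analysis for $\xi \in [0, 2\pi)$ then shows that at least one translate $\xi + 2\pi\ell$ always avoids those zeros, so the Jia-Micchelli condition holds.

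The principal obstacle lies in the first stage: upgrading the geometric-mean statement $b_{j,0}^{1/j} \to \rho$ to a genuine lower bound $b_{j,0} \ge c\rho^j$ on a sequence of scales, and then leveraging stability to convert this into a pointwise obstruction to higher H\"older regularity of $\phi$. A Perron-Frobenius-type argument on the iteration $\bfb_{j+1} = M\bfb_j$ appears to be the natural tool: under $B \ge 0$ all iterates $\bfb_j$ are non-negative, and under $B > 0$ one should be able to verify that $\bfb_0$ has nonzero projection onto the dominant eigenspace of $M$, yielding the required lower bound for all sufficiently large $j$.
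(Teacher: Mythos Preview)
Your two-stage outline is exactly the paper's approach: Lemma~\ref{lem:stable} and Lemma~\ref{lem:stable2} carry out Stage~1, and the proof of Theorem~\ref{thm:MainTheoremStrict} carries out Stage~2 via the Jia--Micchelli criterion and the B-spline factorization, just as you describe.

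However, the ``principal obstacle'' you identify in your final paragraph is illusory, and the Perron--Frobenius machinery is unnecessary. You do not need a pointwise lower bound $b_{j,0} \ge c\rho^j$; the limit statement $b_{j,0}^{1/j} \to \rho$ from Theorem~\ref{thm:MainTheorem} is already enough. Concretely: choose initial data with $g_{0,k}^{[r]} = \delta_{k,0}$ for $|k| \le p-1$ (this is why the paper does not work with the cardinal data directly). Then $g_{j,0}^{[r]} = b_{j,0}$ for all $j$, by the support bound $|b_{j,k}| = 0$ for $|k| > (2^j-1)p$ applied to \eqref{eq:gb}. If the limit $f$ had regularity $r - \log_2\rho + \varepsilon$, write this as $q+\alpha$ with $q \le r$ and $0 < \alpha < 1$; the stability-based bound (your Stage~1 bridge, the paper's Lemma~\ref{lem:stable}) gives
\[
  b_{j,0} = |g_{j,0}^{[r]}| \le C\, 2^{j(r-q-\alpha)} = C\,(\rho\, 2^{-\varepsilon})^j,
\]
hence $\limsup_{j\to\infty} b_{j,0}^{1/j} \le \rho\, 2^{-\varepsilon} < \rho$, contradicting \eqref{eq:bj0lim}. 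No eigenvector analysis, no lower bound along a subsequence, is required. Your sketch is otherwise sound; just drop the last paragraph and replace the cardinal data by data tailored so that $g_{0,\cdot}^{[r]}$ is the unit sequence on the relevant index range.
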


To prove this we first establish a lemma that shows that the bound is
optimal whenever the cardinal function of the scheme
has $\ell^\infty$-stable integer translates. The main point
in proving this lemma is that the stability allows us to
bound divided differences of the scheme by corresponding divided
differences of the limit function.

Let $\phi$ denote the cardinal function of the scheme (\ref{eq:scheme}),
i.e., its limit when the initial data is the
cardinal data $f_{0,k} = \delta_{k,0}$.
Then the limit function for general data can be expressed as the
linear combination
$$ f(x) = \sum_\ell f_{0,\ell} \phi(x-\ell). $$
Following Jia and Micchelli \cite{JM},
we shall say that $\phi$
\emph{has $\ell^\infty$-stable integer translates} if there is some
constant $K > 0$ such that
for any sequence $\bfc = (c_\ell)_\ell$ in $\ell^\infty(\ZZ)$,
\begin{equation}\label{eq:stability}
   \Vert \sum_\ell c_\ell \phi(\cdot - \ell) \Vert_{L^\infty(\RR)} 
    \ge K \Vert\bfc\Vert_{\ell^\infty(\ZZ)}.
\end{equation}

\begin{lemma}\label{lem:stable}
Suppose $\phi$ has $\ell^\infty$-stable integer translates and
$f$ has regularity $q + \alpha$ for some $q \in \NN_0$ and $0 < \alpha < 1$.
Then for any integer $r \ge q$, there is a constant $C$ such that
\begin{equation}\label{eq:growthopt}
 |g_{j,k}^{[r]}| \le C 2^{j(r-q-\alpha)}.
\end{equation}
\end{lemma}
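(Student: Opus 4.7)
The plan is to express $g_{j,k}^{[r]}$ as a finite backward difference of the sequence $f_{j,\cdot}$, compare that sequence to a Taylor polynomial of $f$ via polynomial reproduction together with stability, and then exploit the H\"older regularity of $f$ to gain a factor $2^{-j(q+\alpha)}$. First I would iterate the recursion (\ref{eq:recurse}) to obtain the closed form
$$g_{j,k}^{[r]} \;=\; \frac{2^{jr}}{r!}\sum_{i=0}^{r+1}(-1)^i\binom{r+1}{i}\,f_{j,k-i},$$
so that it suffices to bound the $(r+1)$-th backward difference of $f_{j,\cdot}$ at $k$ by $C'\cdot 2^{-j(q+\alpha)}$.

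Next, iterating the refinement equation gives $f(x)=\sum_m f_{j,m}\,\phi(2^j x - m)$. I would take $T$ to be the Taylor polynomial of $f$ of degree $q$ centered at $x_\ast=2^{-j}k$, so that Taylor with H\"older remainder yields $|f(x)-T(x)|\le C_1|x-x_\ast|^{q+\alpha}$ on a fixed neighborhood of $x_\ast$. Because the factor $(1+z)^{r+1}$ in $a(z)$ with $r\ge q$ forces the scheme to reproduce polynomials of degree $\le r$, there exists a sequence $(T_m)_m$, itself polynomial in $m$ of degree $\le q$, with $T(x)=\sum_m T_m\,\phi(2^j x - m)$. Writing $c_m:=f_{j,m}-T_m$, one obtains
$$(f-T)(x)=\sum_m c_m\,\phi(2^j x - m),$$
and since the $(r+1)$-th backward difference annihilates the polynomial sequence $(T_m)$, the difference of $f_{j,\cdot}$ appearing in the displayed formula for $g_{j,k}^{[r]}$ equals the same difference applied to $(c_m)$.

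The crux is bounding $|c_m|$ for $m$ within $r+1$ of $k$. For this I would invoke a local version of the $\ell^\infty$-stability (\ref{eq:stability}): since $\phi$ has compact support, global stability produces constants $C_\phi,R>0$ such that
$$|c_m|\;\le\; C_\phi\sup_{|y-m|\le R}\Bigl|\sum_{m'}c_{m'}\phi(y-m')\Bigr|$$
for every bounded sequence. Applied after the rescaling $y=2^j x$, this gives $|c_m|\le C_\phi\sup_{|x-2^{-j}m|\le R\,2^{-j}}|f(x)-T(x)|\le C_2\,2^{-j(q+\alpha)}$ whenever $|m-k|\le r+1$. Substituting back into the closed form for $g_{j,k}^{[r]}$ yields the desired bound $|g_{j,k}^{[r]}|\le C\,2^{j(r-q-\alpha)}$.

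The hard part will be justifying the local form of stability in the last step, since it is not built into (\ref{eq:stability}) as stated. The standard route combines compact support of $\phi$ with $\ell^\infty$-stability to produce compactly supported dual functionals $\mu_m$ satisfying $\mu_m(\phi(\cdot-m'))=\delta_{mm'}$, from which $c_m=\mu_m(g)$ gives the local inequality directly; alternatively, one may truncate $(c_m)$ to a bounded window about $k$ and apply the global inequality to the truncation, absorbing the boundary contributions using compact support of $\phi$. A secondary check is that polynomial reproduction really does yield the sequence $(T_m)$ as a polynomial of degree $\le q$ in $m$, which follows by noting that after rescaling $T(2^{-j}\cdot)$ is a polynomial of the same degree and then appealing to the standard polynomial reproduction identity for masks divisible by $(1+z)^{r+1}$.
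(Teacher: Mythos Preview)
Your argument is essentially correct, but it takes a different and more laborious route than the paper. The paper's key observation is that divided differences commute with the quasi-interpolant representation: from $f(x)=\sum_\ell f_{j,\ell}\phi(2^jx-\ell)$ and linearity one obtains
\[
\tilde g_{j,y}^{[q]}\;:=\;[2^{-j}(y-q),\ldots,2^{-j}y]f-[2^{-j}(y-q-1),\ldots,2^{-j}(y-1)]f
\;=\;\sum_\ell g_{j,\ell}^{[q]}\,\phi(y-\ell)
\]
for \emph{every} real $y$. Since $\tilde g_{j,y}^{[q]}=(f^{(q)}(\xi_1)-f^{(q)}(\xi_0))/q!$ with $|\xi_1-\xi_0|\le (q+1)2^{-j}$, the H\"older hypothesis bounds $|\tilde g_{j,y}^{[q]}|\le C'2^{-j\alpha}$ \emph{uniformly in $y$}. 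Hence the global stability inequality (\ref{eq:stability}) applies directly to give $|g_{j,\ell}^{[q]}|\le K^{-1}C'2^{-j\alpha}$, after which one passes from $q$ to $r$ by the recursion (\ref{eq:recurse}).

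The contrast with your approach is this: by centering a Taylor polynomial at a single point $x_\ast=2^{-j}k$, your bound on $f-T$ is intrinsically local, so you are forced to upgrade (\ref{eq:stability}) to a local stability statement (or to invoke compactly supported dual functionals). That upgrade is true for compactly supported $\phi$, but it is extra machinery not needed here; your truncation alternative is delicate because the Taylor remainder grows away from $x_\ast$, so the truncated function is not uniformly small. The paper sidesteps all of this by differencing at order $q$ first, which converts the pointwise Taylor estimate into a globally uniform one and lets the stability hypothesis be used exactly as stated.
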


\begin{proof}
As is well known, see e.g. the review by Dyn and Levin \cite{DL},
$\phi$ satisfies the two-scale difference
equation
\begin{equation}\label{eq:twoscale}
 \phi(x) = \sum_k a_k \phi(2x-k),
\end{equation}
and therefore, for any $j \ge 0$,
\begin{equation}\label{eq:useful}
 f(x) = \sum_\ell f_{j,\ell} \phi(2^jx-\ell).
\end{equation}
We can use this equation to relate any divided difference
of $f$ of the form
$$ \tilde f_{j,y}^{[q]} := [2^{-j}(y-q),2^{-j}(y-q+1),\ldots,2^{-j}(y)]f, $$
for $y \in \RR$, to the divided differences of the scheme.
Putting $x = 2^{-j}(y-k)$ in (\ref{eq:useful}) gives
$$ f\big(2^{-j}(y-k)\big) = \sum_\ell f_{j,\ell-k} \phi(y-\ell), $$
and, using the cases $k=0,1,\ldots,q$, and
the linearity of divided differences,
$$ \tilde f_{j,y}^{[q]} = \sum_\ell f_{j,\ell}^{[q]} \phi(y-\ell). $$
Similarly, if
$$ \tilde g_{j,y}^{[q]} := \tilde f_{j,y}^{[q]} - \tilde f_{j,y-1}^{[q]}, $$
then
$$ \tilde g_{j,y}^{[q]} = \sum_\ell g_{j,\ell}^{[q]} \phi(y-\ell). $$

Recalling that $f$ has compact support,
if $f$ has regularity $q + \alpha$, there
is some $C>0$ such that for any $\xi_0,\xi_1 \in \RR$,
$$ |f^{(q)}(\xi_1) - f^{(q)}(\xi_0)|
  \le C |\xi_1 - \xi_0|^\alpha, $$
and, by a standard property of divided differences, for each $j$ and $y$,
$$ |\tilde g_{j,y}^{[q]}| = |f^{(q)}(\xi_1) - f^{(q)}(\xi_0)| / q!,$$
for $\xi_0,\xi_1 \in \big(2^{-j}(y-q-1),2^{-j}(y)\big)$.
Therefore, for any $y$,
$$ |\tilde g_{j,y}^{[q]}| \le C' 2^{-j\alpha}, $$
where
$$ C' = C (q+1)^\alpha / q!. $$
Therefore,
$$ \Vert \sum_\ell g_{j,\ell}^{[q]} 
        \phi(\cdot -\ell) \Vert_{L^\infty(\RR)} 
    \le C' 2^{-j\alpha}, $$
and by (\ref{eq:stability}) it follows that for any $\ell \in \ZZ$,
$$
  |g_{j,\ell}^{[q]}| \le K^{-1} C' 2^{-j\alpha}.
$$
Finally, by applying the divided difference definitions
(\ref{eq:recurse}) recursively, $r-q$ times, we obtain (\ref{eq:growthopt}).
\end{proof}

\begin{lemma}\label{lem:stable2}
If $\phi$ has $\ell^\infty$-stable integer translates then
the lower bound, $r - \log_2(\rho)$, of
Theorem~\ref{thm:MainTheorem} is optimal.
\end{lemma}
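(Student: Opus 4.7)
The plan is to argue by contradiction. Suppose the regularity $\gamma$ of the scheme strictly exceeds $r - \log_2(\rho)$, so that $2^{r-\gamma} < \rho$. I will produce compactly supported initial data for which the derived scheme on divided differences reduces to a pure pulse at the origin, i.e.\ $g_{j,0}^{[r]} = b_{j,0}$ for every $j$. Applying Lemma~\ref{lem:stable} to this data will bound $b_{j,0}$ from above by $C\, 2^{j(r-q-\alpha)}$ for every admissible $q + \alpha < \gamma$, and Theorem~\ref{thm:MainTheorem}'s identity $\rho = \lim b_{j,0}^{1/j}$ will then force $\rho \le 2^{r-\gamma}$, contradicting the hypothesis.

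The construction uses the generating-function identity
\[
 g_0^{[r]}(z) = (1-z)^{r+1} F_0(z)/r!, \qquad F_0(z) = \sum_k f_{0,k}\, z^k,
\]
which follows from $f_0^{[r]}(z) = (1-z)^r F_0(z)/r!$ and $g_0^{[r]}(z) = (1-z)\, f_0^{[r]}(z)$. The formal choice $F_0(z) = r!/(1-z)^{r+1}$, i.e.\ $f_{0,k} = r!\binom{k+r}{r}$ for $k \ge 0$, yields $g_0^{[r]}(z) = 1$, a pure pulse. To obtain compactly supported data I truncate: take $f_{0,k} = r!\binom{k+r}{r}$ for $0 \le k \le N$ and zero otherwise, for some fixed $N \ge p-1$. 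The truncated $g_0^{[r]}(z)$ still has $g_{0,0}^{[r]} = 1$ and $g_{0,k}^{[r]} = 0$ for $1 \le k \le N$, but picks up $r+1$ tail coefficients $c_k := g_{0,k}^{[r]}$ at $k = N+1, \ldots, N+r+1$. Iterating \eqref{eq:Gj} then yields
\[
 g_{j,\ell}^{[r]} = b_{j,\ell} + \sum_{k=N+1}^{N+r+1} c_k\, b_{j,\ell - 2^j k}.
\]
At $\ell = 0$ every tail index satisfies $2^j k \ge 2^j p > (2^j - 1)p = p_j$, so it lies strictly outside the support of $b_j$. Hence $g_{j,0}^{[r]} = b_{j,0}$ for all $j \ge 0$.

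Since the initial data is compactly supported and $\phi$ has $\ell^\infty$-stable integer translates, the limit $f$ is a finite linear combination of translates of $\phi$ and therefore shares the scheme's regularity $\gamma$. Lemma~\ref{lem:stable} then gives, for every $q \in \NN_0$ and $\alpha \in (0,1)$ with $q + \alpha < \gamma$ and $r \ge q$,
\[
 b_{j,0} = |g_{j,0}^{[r]}| \le C\, 2^{j(r-q-\alpha)}.
\]
Taking $j \to \infty$ and invoking Theorem~\ref{thm:MainTheorem} gives $\rho \le 2^{r-q-\alpha}$, and letting $q + \alpha \nearrow \gamma$ produces $\rho \le 2^{r-\gamma}$, the desired contradiction.

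The main obstacle is the construction in the second paragraph: matching the pulse idea, which naively requires unbounded initial data, with the paper's standing compact-support assumption. The trick is that the tail coefficients $c_k$ can only reach an index $\ell$ through arguments of the form $\ell - 2^j k$, and the choice $N \ge p - 1$ ensures $|2^j k| > p_j$ for every tail index $k$, so these contributions never reach $\ell = 0$, regardless of how many subdivision steps are taken. Once the clean identity $g_{j,0}^{[r]} = b_{j,0}$ is in hand, Lemma~\ref{lem:stable} and Theorem~\ref{thm:MainTheorem} combine immediately to deliver the result.
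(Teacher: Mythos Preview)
Your proof is correct and follows the same strategy as the paper: construct compactly supported initial data for which $g_{j,0}^{[r]}=b_{j,0}$, then combine Lemma~\ref{lem:stable} with the limit identity~\eqref{eq:bj0lim} to reach a contradiction. The only real difference is cosmetic: the paper simply asserts the existence of finitely supported data with $g_{0,k}^{[r]}=\delta_{k,0}$ for $|k|\le p-1$ (which is immediate since this is a finite linear system in the $f_{0,k}$), whereas you build such data explicitly via truncated binomial coefficients and then verify that the truncation tail cannot reach the central index---more work than needed, but perfectly valid.
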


\begin{proof}
Let $f$ be the limit of the scheme with any initial data for which
$g_{0,k}^{[r]} = \delta_{k,0}$, $-p+1 \le k \le p-1$, and with
only a finite number of initial data $f_{0,k}$ non-zero.
Then $f$ has compact support.
Suppose that $f \in C^{r-\log_2\rho + \epsilon}$ for some
small $\epsilon > 0$ and write the exponent as
$$ r-\log_2\rho + \epsilon = q + \alpha, $$
for $q \in \NN_0$ and $0 < \alpha < 1$. If $\rho > 1/2$, we have
$r \ge q$, and so Lemma~\ref{lem:stable} can be applied, implying
$$ |g_{j,k}^{[r]}| \le C 2^{j(\log_2\rho - \epsilon)}
      = C \rho^j 2^{-j\epsilon}. $$
Thus,
$$ \limsup_{j\to \infty} |g_{j,0}^{[r]}|^{1/j}  \le \rho 2^{-\epsilon}. $$
But $g_{j,0}^{[r]} = b_{j,0}$ of equation (\ref{eq:bj0lim})
and so this contradicts (\ref{eq:bj0lim}).
\end{proof}

Using this lemma we can now prove
Theorem~\ref{thm:MainTheoremStrict} by comparing the cardinal
function $\phi$ with B-splines,
which are known to be stable. A similar idea was used
by Dong and Shen \cite[Lemma 2.2]{DS} to show that
pseudo-splines are stable.

\begin{proof}[Proof of Theorem \ref{thm:MainTheoremStrict}]
By Lemma~\ref{lem:stable2} it is sufficient to show that
$\phi$ has $\ell^\infty$-stable integer translates if $B > 0$.
If the scheme (\ref{eq:scheme}) is interpolatory, in the sense that
$a_{2k} = \delta_{k,0}$, then
$\phi(k) = \delta_{k,0}$ and so
the stability condition (\ref{eq:stability}) holds
with $K = 1$.
To show stability in the general case,
we apply some results by Jia and Micchelli \cite{JM}.
We denote the (continuous) Fourier transform of $\phi$ by
$$ \wphi(\xi) = \int_{\RR} \phi(x) e^{-i\xi x} \, dx,
   \qquad \xi \in \RR. $$
Since the scheme (\ref{eq:scheme}) has constant precision,
$$ \sum_\ell \phi(x-\ell) = 1, \qquad x \in \RR, $$
and, as shown by Jia and Micchelli \cite[Theorem 2.4]{JM}, $\wphi(0) = 1$.
Since the Fourier transform of (\ref{eq:twoscale})
is
$$ \wphi(\xi) = 
   2^{-1} A(\xi/2) \wphi(\xi/2), $$
it follows that
$$ \wphi(\xi) = 
      \prod_{j=1}^\infty \big(2^{-1} A(\xi/2^j)\big). $$
A sufficient condition \cite[Theorem 3.5]{JM} for $\phi$
to have $\ell^\infty$-stable integer translates is that
\begin{equation}\label{eq:sup}
 \sup_{\ell \in \ZZ} |\wphi(\xi + 2 \pi \ell)| > 0,
 \qquad \hbox{for all $\xi \in \RR$.}
\end{equation}
Consider then again
the case that the derived scheme (\ref{eq:derivedscheme})
holds. Then
$$ A(\xi) = 2 \cos^{r+1}(\xi / 2) B(\xi), $$
where, since $A(0) = 2$ under the assumption of convergence,
$B(0) = 1$.
In the B-spline scheme of degree $r$ we have $b(z) = 1$, in which
case we can write the symbol as $a_r(z) = (1+z)^{r+1} / 2^r$.
The cardinal function $\phi_r$ is the B-spline of degree $r$
centred at 0, and we have
$$ \wphi_r(\xi) = 
      \prod_{j=1}^\infty \cos^{r+1}\big(\xi/2^{j+1}\big) = 
   \left(\frac{\sin(\xi/2)}{\xi/2}\right)^{r+1}. $$
It then follows that
$$ \wphi(\xi) = \wphi_r(\xi)
      \prod_{j=1}^\infty B\big(\xi/2^j\big). $$
Since the condition (\ref{eq:sup}) holds for the B-spline $\phi_r$
we deduce that $\phi$ has $\ell^\infty$-stable integer translates
if $B(\xi) > 0$ for all $\xi \in [-\pi, \pi]$.
\end{proof}

\begin{example}
Consider the quintic
Dubuc-Deslauriers scheme \cite{Dubuc,DD} with mask
$$
\bfa = \frac{1}{256}
(3,0,-25,0,150,256,150,0,-25,0,3).
$$
There is a factorization \eqref{eq:SymbolDerived} up
to $r = 5$, in which case one finds
$$
\bfb = (b_{-2},b_{-1},b_0,b_1,b_2)
     = \frac{1}{8} (3,-18,38,-18,3),
$$
with $p=2$, and
$$
 B(\xi)
 = \frac{1}{8} \left(38 - 36 \cos\xi + 6 \cos 2\xi \right).
$$
Making the substitution $s = \sin^2(\xi/2)$ yields
$$ B(\xi) = 1 + 3 s + 6s^2 > 0 $$
for any $\xi \in [-\pi, \pi]$.
Thus
Theorems~\ref{thm:MainTheorem} and~\ref{thm:MainTheoremStrict} both apply.
We find $\rho$ either as the spectral radius of the
matrix (\ref{eq:Mlarge}),
$$
 \frac{1}{8} \left[\begin{matrix}
  -18 & -18 & 0 \\
   3 & 38 & 3 \\
  0 & -18 & -18
     \end{matrix}\right],
$$
or of the smaller, folded matrix
matrix (\ref{eq:Msmall}),
$$
 \frac{1}{8} \left[\begin{matrix}
  38    & 6 \\
  -18    & -18
     \end{matrix}\right].
$$
In both cases we find $\rho = 9/2$ and therefore
the scheme has regularity 
$$ 5 - \log_2(9/2)\approx 2.8301. $$
\end{example}

%%%%%%%%%%%%%%%%%%%%%%%%%%%%%%%%%%%%%%%%%%%%%%%%%%%%%%%%%%%%%%%%%%%%%%%%%%%%
\section{Pseudo-splines}\label{sec:pseudo}
%%%%%%%%%%%%%%%%%%%%%%%%%%%%%%%%%%%%%%%%%%%%%%%%%%%%%%%%%%%%%%%%%%%%%%%%%%%%

In the remainder of the paper we focus on the pseudo-spline schemes
and their dual versions, all of which satisfy the conditions of
Theorem~\ref{thm:MainTheorem}
and Theorem~\ref{thm:MainTheoremStrict}.
We first compute numerically their regularities from
the spectral radius of $M$ in (\ref{eq:Msmall}) and
tabulate them.
Then, by making pointwise comparisons among their Fourier
transforms, we derive various comparisons among their
regularities. For example, we show that the regularity
of the Dubuc-Deslauriers family of schemes increases with the polynomial
degree used to define them.

%%%%%%%%%%%%%%%%%%%%%%%%%%%%%%%%%%%%%%%%%%%%%%%%%%%%%%%%%%%%%%%%%%%%%%%%%%%%
\subsection{Computing regularities}\label{subsec:computing}
%%%%%%%%%%%%%%%%%%%%%%%%%%%%%%%%%%%%%%%%%%%%%%%%%%%%%%%%%%%%%%%%%%%%%%%%%%%%
For integers $m\geq 1$ and $\ell=1,\ldots,m-1$, the
(primal) pseudo-spline scheme can be defined in terms of its
symbol as
\begin{equation*}%\label{eq:pseudospline}
a_{m,\ell}(z) = 2\sigma^m(z) b_{m,\ell}(z), \qquad
b_{m,\ell} (z)
= \sum_{k=0}^\ell \binom{m-1+k}{k} \delta^k(z),
\end{equation*}
where
$$ \sigma(z)=\frac{(1+z)^2}{4z}, \qquad \delta(z) = -\frac{(1-z)^2}{4z}. $$
The Fourier transform of $a_{m,\ell}$ is then
\begin{align*}
A_{m,\ell}(\xi) &= 2\cos^{2m}(\xi/2) B_{m,\ell}(\xi), \cr
B_{m,\ell}(\xi) &= \sum_{k=0}^\ell \binom{m-1+k}{k} \sin^{2k}(\xi/2).
\end{align*}
These schemes can be viewed as a blend between the B-spline and
Dubuc-Deslauriers schemes: when $\ell=0$ the scheme
is B-spline subdivision of degree $2m-1$ and when $\ell = m-1$
the scheme is $(2m)$-point Dubuc-Deslauriers subdivision.
These schemes were introduced by
Daubechies, Han, Ron, and Shen \cite{DHRS}, and
further studied by Dong and Shen in \cite{DS} and \cite{DS2}.

A family of `dual' Dubuc-Deslauriers schemes
was studied by Dyn, Floater, and Hormann \cite{DFH04} and
generalized by Dyn, Hormann, Sabin, and Shen \cite{DHSS} to a
family of dual pseudo-spline schemes defined by the symbol
\begin{equation*}
 \ta_{m,\ell}(z) = \frac{1+z}{z}\sigma^m(z) \tb_{m,\ell}(z), \qquad
 \tb_{m,\ell}(z) = \sum_{k=0}^\ell \binom{m - 1/2 + k}{k} \delta^k(z),
\end{equation*}
for integers $m\geq 1$ and $\ell=1,\ldots,m-1$. The Fourier transform
of $\ta_{m,\ell}$ is
\begin{align*}
\tA_{m,\ell}(\xi)  &= 2e^{i\xi/2}\cos^{2m+1}(\xi/2) \tB_{m,\ell}(\xi), \cr
\tB_{m,\ell}(\xi)  &= \sum_{k=0}^\ell \binom{m-1/2+k}{k} \sin^{2k}(\xi/2).
\end{align*}

Since both $B \ge 1 > 0$ and $\tB \ge 1 > 0$,
Theorems~\ref{thm:MainTheorem} and~\ref{thm:MainTheoremStrict}
apply to both kinds of scheme.

\begin{example}
The primal scheme $a_{4,3}(z)$ is the eight-point Dubuc-Deslauriers
scheme. We can take $r=7$ in the
factorization \eqref{eq:SymbolDerived} and we have
$b(z) = b_{4,3}(z) = b_{-3} z^{-3} + \cdots + b_3 z^3$, with
\[ (b_{-3}, \ldots, b_3) = \frac{1}{16}(-5, 40, -131, 208, -131, 40, -5). \]
So $p=3$ and the folded matrix $M$ in (\ref{eq:Msmall}) has dimension $3$.
Thus $\rho$ is the largest root in absolute value of the cubic polynomial
\[
\det(M - \lambda I) = 
 \det\begin{bmatrix}b_0 - \lambda & 2b_2 & 0\\b_1 & b_1 + b_3 - 
       \lambda & b_3\\b_2 & b_0 & b_2 - \lambda \end{bmatrix} 
  = -\lambda^3 + 7\lambda^2 + \frac{217}{4}\lambda - 125,
\]
which is $\rho \approx 10.91976$ and so
the scheme has regularity $7 - \log_2(\rho) \approx 3.55113$.
\end{example}

Similarly, one can compute the regularities of the schemes
$a_{m,\ell}(z)$ and $\ta_{m,\ell}(z)$ as the
$\log_2$ of algebraic numbers of degree at most $\ell$.
These are shown, to five decimal places,
in Tables \ref{tab:regprimal} and~\ref{tab:regdual}
respectively for $1\le \ell < m \le 8$.
These numbers agree to four decimal places with those computed
from a joint spectral radius in Dong, Dyn, and Hormann \cite{DDH}.

\begin{table}[t]
{\small
\begin{tabular}{crrrrrrr}\hline
      & $l=1$ & $l=2$ & $l=3$ & $l=4$ & $l=5$ & $l=6$ & $l=7$\\
\hline
$m=2$ &  2       &   &   &  &  &  &  \\
$m=3$ &  3.67807 &  2.83007 &   &  &  &  &  \\
$m=4$ &  5.41504 &  4.34379 &  3.55113 &  &  &  &  \\
$m=5$ &  7.19265 &  5.92502 &  4.96207 & 4.19357 &  &  &  \\
$m=6$ &  9       &  7.55781 &  6.43997 & 5.53250 & 4.77675 &  &  \\
$m=7$ & 10.83007 &  9.23111 &  7.97187 & 6.93577 & 6.06273 & 5.31732 &  \\
$m=8$ & 12.67807 & 10.93702 &  9.54804 & 8.39272 & 7.41006 & 6.56398 & 5.82944 \\ \hline
\end{tabular}
}
\caption{Regularities for $a_{m,l}$.}\label{tab:regprimal}
\end{table}

\begin{table}[t]
{\small
\begin{tabular}{crrrrrrr}\hline
      & $l=1$ & $l=2$ & $l=3$ & $l=4$ & $l=5$ & $l=6$ & $l=7$\\
\hline
$m=2$ &   2.83007 &   &   &   &  &  &  \\
$m=3$ &   4.54057 &  3.57723 &   &   &  &  &  \\
$m=4$ &   6.29956 &  5.12711 &  4.24726 &   &  &  &  \\
$m=5$ &   8.09311 &  6.73575 &  5.69355 &  4.85423 &  &  &  \\
$m=6$ &   9.91254 &  8.38994 &  7.19984 &  6.22682 & 5.41143 &  & \\
$m=7$ &  11.75207 & 10.08039 &  8.75493 &  7.65811 & 6.72934 & 5.93283 & \\
$m=8$ &  13.60768 & 11.80033 & 10.35034 &  9.13861 & 8.10385 & 7.20968 & 6.43070 \\ \hline
\end{tabular}
}
\caption{Regularities for $\ta_{m,l}$.}\label{tab:regdual}
\end{table}

%%%%%%%%%%%%%%%%%%%%%%%%%%%%%%%%%%%%%%%%%%%%%%%%%%%%%%%%%%%%%%%%%%%%%%%%%%%%
\subsection{Comparisons}\label{subsec:comparisons}
%%%%%%%%%%%%%%%%%%%%%%%%%%%%%%%%%%%%%%%%%%%%%%%%%%%%%%%%%%%%%%%%%%%%%%%%%%%%
In order to make comparisons between the
regularities of the various primal and dual pseudo-spline schemes,
we will show that it is sufficient to make
pointwise comparisons between their corresponding Fourier transforms.
Consider two subdivision schemes defined by
their Fourier transforms $A$ and $\tilde A$,
and suppose that for some integers $r, \tilde r \ge 0$,
\begin{align}
\label{eq:At}
 A(\xi) = 2\cos^{r+1}(\xi/2) B(\xi), \\
\label{eq:tAt}
 \tilde A(\xi) = 2\cos^{\tilde r+1}(\xi/2) \tilde B(\xi),
\end{align}
where $B$ and $\tilde B$ are real and symmetric in $\xi$ and
$B > 0$ and $\tilde B > 0$.
Let $\gamma$ and $\tilde\gamma$ be the respective regularities
of the two schemes.

\begin{lemma}\label{lem:comparison}
If there is a constant $C \geq 1$ such that
$$ \tilde B(\xi) \le C B(\xi), \qquad  \xi \in [-\pi, \pi], $$
then
$$ \tilde\gamma \ge \gamma + \tilde r - r - \log_2 C. $$
\end{lemma}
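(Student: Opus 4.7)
The plan is to reduce the regularity comparison to a spectral radius comparison. Since $B > 0$ and $\tilde B > 0$, Theorems~\ref{thm:MainTheorem} and~\ref{thm:MainTheoremStrict} give the exact expressions $\gamma = r - \log_2 \rho$ and $\tilde \gamma = \tilde r - \log_2 \tilde \rho$, where $\rho$ and $\tilde\rho$ are the spectral radii arising in (\ref{eq:bj0lim}) for the two schemes. The desired bound $\tilde\gamma \ge \gamma + \tilde r - r - \log_2 C$ is then equivalent to the single inequality $\tilde\rho \le C\rho$.

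To establish this I would use the integral representation appearing in the proof of Rioul's Lemma. From (\ref{eq:bj}), setting $z = e^{-i\xi}$ shows that the Fourier series of $b_j$ factors as $B_j(\xi) = B(\xi) B(2\xi) \cdots B(2^{j-1}\xi)$, and similarly for $\tilde B_j$. By Rioul's Lemma these are nonnegative, so reading off the constant Fourier coefficient gives
$$ b_{j,0} = \frac{1}{2\pi} \int_{-\pi}^{\pi} B_j(\xi) \, d\xi, \qquad \tilde b_{j,0} = \frac{1}{2\pi} \int_{-\pi}^{\pi} \tilde B_j(\xi) \, d\xi. $$

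Since $B$ and $\tilde B$ are $2\pi$-periodic, the hypothesis $\tilde B \le C B$ on $[-\pi, \pi]$ automatically extends to all of $\RR$. Combining this with $B \ge 0$ and multiplying the $j$ pointwise inequalities at the dilated arguments $\xi, 2\xi, \ldots, 2^{j-1}\xi$ yields $\tilde B_j(\xi) \le C^j B_j(\xi)$ for every $\xi$. Integrating over $[-\pi,\pi]$ gives $\tilde b_{j,0} \le C^j b_{j,0}$, and taking $j$-th roots and passing to the limit via (\ref{eq:bj0lim}) produces $\tilde\rho \le C\rho$, which rearranges to the claimed inequality.

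The calculation is short and the main step—the multiplicative iteration of the pointwise bound on $B$—is essentially forced by the shape of $B_j$ together with Rioul's nonnegativity. I do not expect any serious obstacle; the only thing to verify is that Rioul's hypothesis holds for both schemes, which is immediate from $B, \tilde B > 0$.
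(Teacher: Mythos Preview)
Your proposal is correct and follows essentially the same route as the paper: iterate the pointwise bound to obtain $\tilde B_j \le C^j B_j$, integrate to get $\tilde b_{j,0} \le C^j b_{j,0}$, and pass to the limit via (\ref{eq:bj0lim}) to conclude $\tilde\rho \le C\rho$, which rearranges to the stated inequality. The paper's write-up is slightly terser (and records $\gamma = r+1-\log_2\rho$ rather than your $r-\log_2\rho$, a harmless offset that cancels in the comparison), but the argument is identical.
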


\begin{proof}
Applying \eqref{eq:bj} twice,
\[ \tilde B_j(\xi) = \tilde B(\xi) \tilde B(2\xi)\cdots \tilde B(2^{j-1}\xi) \le
C^j B_j(\xi), \]
for all $\xi \in [-\pi, \pi]$ and $j \ge 0$. Therefore,
$$ \tilde b_{j,0} = \frac{1}{2\pi} \int_{-\pi}^\pi \tilde B_j(\xi) \, d\xi
   \le C^j \frac{1}{2\pi} \int_{-\pi}^\pi B_j(\xi) \, d\xi
   = C^j b_{j,0},  \qquad j \ge 0, $$
and so
$$ \tilde\rho = \lim_{j \to \infty} (\tilde b_{j,0})^{1/j}
                \le C \lim_{j \to \infty} (b_{j,0})^{1/j} = C \rho, $$
from which the result follows since
\[ \gamma = r+1-\log_2 \rho, \qquad
   \tilde \gamma = \tilde r + 1 - \log_2 \tilde \rho. \qedhere \]
\end{proof}

As an example of the use of this lemma, suppose that $A$ and $\tA$ are
the $2m$- and $2(m+1)$-point Dubuc-Deslauriers schemes
respectively, and that their regularities are $\gamma$ and
$\tilde\gamma$ respectively.
The lemma implies that $\tilde\gamma \ge \gamma$ if
$\tB(\xi) \le 4 B(\xi)$ for all $\xi \in [-\pi, \pi]$.
In turns out that this latter inequality holds.
This is part of the proof of the following more general result.

\begin{theorem}\label{thm:pseudo}
Let $\gamma_{m,\ell}$ be the regularity of the
pseudo-spline scheme defined by $a_{m,\ell}$,
and let $\gamma_m = \gamma_{m,m-1}$. Then
\begin{enumerate}
\item[(i)] $\gamma_{m,\ell}$ is decreasing in $\ell$, and moreover,
$$
   \gamma_{m,\ell-1} - \log_2 \left(\frac{m+\ell}{\ell}\right)
   \le \gamma_{m,\ell} 
   \le \gamma_{m,\ell-1},
$$
\item[(ii)] $\gamma_{m,\ell}$ is increasing in $m$, and moreover,
$$
   \gamma_{m,\ell} + \log_2 \left(\frac{4m}{m+\ell}\right)
   \le \gamma_{m+1,\ell} 
   \le \gamma_{m,\ell} + 2,
$$
\item[(iii)] $\gamma_{m}$ is increasing in $m$, and moreover,
$$
   \gamma_m + \log_2 \left(\frac{2m+2}{2m+1}\right)
   \le \gamma_{m+1} 
   \le \gamma_m + 2.
$$
\end{enumerate}
\end{theorem}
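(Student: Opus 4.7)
My plan is to apply Lemma~\ref{lem:comparison} six times, once for each of the three pairs of inequalities (upper and lower). In each application the constant $C$ arises from a pointwise comparison between two of the polynomials
\[
B_{m,\ell}(\xi) = \sum_{k=0}^\ell \binom{m-1+k}{k} \sin^{2k}(\xi/2).
\]
After the substitution $s = \sin^2(\xi/2) \in [0,1]$, each $B_{m,\ell}$ becomes an honest polynomial in $s$ with non-negative coefficients, so every required bound $\tilde B \le C B$ on $[-\pi,\pi]$ reduces to a polynomial inequality on $[0,1]$.

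The main tool is the following quotient principle: if $p(s) = \sum_k \alpha_k s^k$ and $q(s) = \sum_k \beta_k s^k$ have non-negative coefficients and the ratios $\alpha_k/\beta_k$ are non-decreasing in $k$ (with the convention $\alpha_k/0 = +\infty$), then $p(s)/q(s)$ is non-decreasing on $[0,\infty)$, and in particular attains its maximum on $[0,1]$ at $s=1$. I verify this in one line by writing
\[
p'(s)q(s) - p(s)q'(s) = \sum_k s^{k-1} \sum_{i+j=k} (i-j)\alpha_i \beta_j
\]
and pairing $(i,k-i)$ with $(k-i,i)$: each pair contributes $(2i-k)(\alpha_i \beta_{k-i} - \alpha_{k-i}\beta_i) \ge 0$ by the ratio hypothesis (and remains $\ge 0$ when some $\beta_k$ vanishes, since then one of the two terms in the pair is itself zero and the other is manifestly non-negative). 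Combined with the hockey-stick identity $\sum_{k=0}^\ell \binom{m-1+k}{k} = \binom{m+\ell}{\ell}$, this reduces every $C$ to a closed-form evaluation at $s=1$.

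The three upper bounds are immediate with $C=1$. For (i), $B_{m,\ell-1} \le B_{m,\ell}$ coefficient-wise, giving $\gamma_{m,\ell} \le \gamma_{m,\ell-1}$ (here $\tilde r - r = 0$). For (ii), $\binom{m-1+k}{k} \le \binom{m+k}{k}$ gives $B_{m,\ell} \le B_{m+1,\ell}$ and hence $\gamma_{m+1,\ell} \le \gamma_{m,\ell}+2$ (here $\tilde r - r = -2$). For (iii) one can either combine (i) and (ii) via $\gamma_{m+1,m} \le \gamma_{m+1,m-1} \le \gamma_{m,m-1} + 2$, or use $B_{m,m-1} \le B_{m+1,m}$ directly. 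For the three lower bounds I apply the quotient principle to the relevant ratio in $s$. In (i) the coefficients of $B_{m,\ell}/B_{m,\ell-1}$ produce ratios $1, \ldots, 1, +\infty$, non-decreasing, with maximum at $s=1$ equal to $(m+\ell)/\ell$. In (ii) the ratios $\binom{m+k}{k}/\binom{m-1+k}{k} = (m+k)/m$ are strictly increasing, with maximum $(m+\ell+1)/(m+1) \le (m+\ell)/m$, giving $C = (m+\ell)/m$. In (iii) the ratios $(m+k)/m$ for $k=0,\ldots,m-1$ together with $+\infty$ at $k=m$ are non-decreasing, with maximum $\binom{2m+1}{m}/\binom{2m-1}{m-1} = 2(2m+1)/(m+1)$. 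Feeding these into Lemma~\ref{lem:comparison} with $\tilde r - r$ equal to $0$, $2$, $2$ respectively and simplifying (e.g.\ in (iii), $2 - \log_2\bigl(2(2m+1)/(m+1)\bigr) = \log_2\bigl((2m+2)/(2m+1)\bigr)$) yields precisely the stated inequalities.

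The only mildly delicate point is the quotient principle in cases (i) and (iii), where the denominator polynomial has strictly smaller degree than the numerator, so some $\beta_k$ vanishes; but, as noted, the pairing argument remains valid in this situation, so the theorem reduces to three one-line ratio evaluations at $s=1$.
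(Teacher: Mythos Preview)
Your proof is correct and shares the same overall architecture as the paper's: six applications of Lemma~\ref{lem:comparison}, reducing each inequality to a pointwise bound between the polynomials $B_{m,\ell}$ in the variable $s=\sin^2(\xi/2)$.  Where you diverge is in how you establish those polynomial bounds.  The paper argues case by case: in~(ii) it simply checks that every coefficient of $CB_{m,\ell}-B_{m+1,\ell}$ is non-negative, while in~(i) and~(iii) the numerator has one extra term, so it rewrites $p(s)=\sum_k c_k(s^k-s^{\mathrm{top}})+c_{\mathrm{top}}s^{\mathrm{top}}$ and uses $s^k\ge s^{\mathrm{top}}$ on $[0,1]$ to reduce to checking $c_k\ge 0$.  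You instead prove a single ``quotient principle'' (monotone coefficient ratios $\Rightarrow$ monotone $p/q$), which handles all three lower bounds uniformly and locates the sharp $C$ at $s=1$; the hockey-stick identity then turns each evaluation into a one-line binomial computation.  This buys you a cleaner, more conceptual argument, and in~(ii) it even delivers the genuinely sharper constant $(m+\ell+1)/(m+1)$ before you relax it to the stated $(m+\ell)/m$.  The paper's approach, by contrast, avoids introducing an auxiliary lemma and keeps everything at the level of explicit coefficient manipulations.  Your handling of the vanishing-$\beta_k$ case in the pairing argument is essentially right for the situations that arise here (all $\alpha_k>0$ up to $\deg p$, and $\beta_k=0$ only beyond $\deg q$), though a reader might appreciate one extra sentence making explicit that $q(s)\ge\beta_0>0$ on $[0,1]$ so that $p/q$ is well defined.
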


\begin{proof}
Part (i) follows from applying Lemma~\ref{lem:comparison}
with $r=\tilde r=2m$. Since $B_{m,\ell-1}(\xi) \le B_{m,\ell}(\xi)$ for
$\xi \in [-\pi, \pi]$, the lemma implies the second inequality in (i).
To prove the first inequality in (i) we look for a constant $C\ge 1$
such that
\begin{equation}\label{eq:i}
 B_{m,\ell}(\xi) \le C B_{m,\ell-1}(\xi), \qquad \xi \in [-\pi, \pi],
\end{equation}
or equivalently, such that
$$ p(s) := C \sum_{k=0}^{\ell-1} \binom{m-1+k}{k} s^k
           - \sum_{k=0}^\ell \binom{m-1+k}{k} s^k \ge 0,
      \qquad 0 \le s \le 1. $$
Letting
$$ c_k := (C-1) \binom{m-1+k}{k}, \qquad 0 \le k \le \ell-1, $$
we can express $p$ as
$$
 p(s) = \sum_{k=0}^{\ell-1} c_k s^k - \binom{m-1+\ell}{\ell} s^\ell
      = \sum_{k=0}^{\ell-1} c_k (s^k-s^\ell) + c_\ell s^\ell,
$$
where
$$
 c_\ell := \sum_{k=0}^{\ell-1} c_k - \binom{m-1+\ell}{\ell}
     = (C-1) \binom{m+\ell-1}{\ell-1} - \binom{m-1+\ell}{\ell}.
$$
For $s \in [0,1]$, $p(s) \ge 0$ if $c_k \ge 0$, $0 \le k \le \ell$.
Clearly, if $C \ge 1$, $c_k \ge 0$ for $0 \le k \le \ell-1$, and
since
$$
 c_\ell = \frac{(m+\ell-1)!}{\ell!m!} \big((C-1)\ell - m\big),
$$
$c_\ell \ge 0$ if $C \ge (m+\ell)/\ell$.
Thus (\ref{eq:i}) holds with $C = (m+\ell)/\ell$,
and Lemma~\ref{lem:comparison} with this value of $C$
gives the first inequality of (i).

To prove Part (ii), we apply Lemma~\ref{lem:comparison}
with $r=2m$ and $\tilde r = 2m+2$, in which case $\tilde r - r = 2$.
Since $B_{m+1,\ell}(\xi) \ge B_{m,\ell}(\xi)$ for $\xi \in [\pi, \pi]$, the lemma implies the second inequality in (ii).
To prove the first inequality in (ii) we
look for a constant $C \ge 1$ such that
\begin{equation*}%\label{eq:ii}
 B_{m+1,\ell}(\xi) \le C B_{m,\ell}(\xi), \qquad \xi \in [-\pi, \pi],
\end{equation*}
or equivalently, such that
$$ p(s) := C \sum_{k=0}^\ell \binom{m-1+k}{k} s^k
           - \sum_{k=0}^\ell \binom{m+k}{k} s^k \ge 0,
      \qquad 0 \le s \le 1. $$
Since
$$
 p(s) = \sum_{k=0}^\ell \frac{(m-1+k)!}{k!m!} \big(Cm - (m+k)\big) s^k,
$$
(\ref{eq:i}) holds with $C = (m+\ell)/m$, and with this $C$,
Lemma~\ref{lem:comparison} implies
the first inequality in (ii).

To prove Part (iii), we again apply Lemma~\ref{lem:comparison}
with $r=2m$ and $\tilde r = 2m+2$, in which case $\tilde r - r = 2$.
Since $B_{m,m-1}(\xi) \le B_{m+1,m}(\xi)$ for $\xi \in [-\pi, \pi]$,
the lemma then implies the second inequality of (iii).
To prove the first inequality we look for a constant $C \geq 1$ such that
\begin{equation}\label{eq:iii}
 B_{m+1,m}(\xi) \le C B_{m,m-1}(\xi), \qquad \xi \in [-\pi, \pi],
\end{equation}
or equivalently, such that
$$ p(s) := C \sum_{k=0}^{m-1} \binom{m-1+k}{k} s^k
           - \sum_{k=0}^m \binom{m+k}{k} s^k \ge 0,
      \qquad 0 \le s \le 1. $$
Letting
$$ c_k := C \binom{m-1+k}{k} - \binom{m+k}{k}, \qquad 0 \le k \le m-1, $$
we can express $p$ as
$$
 p(s) = \sum_{k=0}^{m-1} c_k s^k - \binom{2m}{m} s^m
      = \sum_{k=0}^{m-1} c_k (s^k-s^m) + c_m s^m,
$$
where
$$
 c_m := \sum_{k=0}^{m-1} c_k - \binom{2m}{m}
     = C \binom{2m-1}{m-1} - \binom{2m+1}{m}.
$$
Similar to part (ii), we have
$c_k \ge 0$, $0 \le k \le m-1$, if $C \ge (2m-1)/m$.
On the other hand,
$$ c_m = \frac{(2m-1)!}{(m-1)!(m+1)!} \big(C(m+1) - 2(2m+1)\big), $$
and so $c_m \ge 0$ if $C \ge 2(2m+1)/(m+1)$.
Thus, (\ref{eq:iii}) holds with
$C = 2(2m+1)/(m+1)$, and Lemma~\ref{lem:comparison} then
yields the first inequality of (iii).
\end{proof}

Similar comparisons can be made for the dual
schemes, and also between the primal and dual ones.
To see this observe that Lemma~\ref{lem:comparison}
also holds if $A$ in (\ref{eq:At}) is replaced by
$$
 A(\xi) = 2e^{i\xi/2} \cos^{r+1}(\xi/2) B(\xi),
$$
with $B$ having the same properties as before, and
the lemma also holds with a similar replacement of~$\tilde A$
in (\ref{eq:tAt}).

Consider then the dual schemes.

\begin{theorem}\label{thm:dualpseudo}
Let $\tgamma_{m,\ell}$ be the regularity of the
dual pseudo-spline scheme $\tilde a_{m,\ell}$,
and let $\tgamma_m = \tgamma_{m,m-1}$. Then
\begin{enumerate}
\item[(i)] $\tgamma_{m,\ell}$ is decreasing in $\ell$, and moreover,
$$
   \tgamma_{m,\ell-1} - \log_2 \left(\frac{m+\ell+1/2}{\ell}\right)
   \le \tgamma_{m,\ell} 
   \le \tgamma_{m,\ell-1},
$$
\item[(ii)] $\tgamma_{m,\ell}$ is increasing in $m$, and moreover,
$$
   \tgamma_{m,\ell} + \log_2 \left(\frac{4(m+1/2)}{m+\ell+1/2}\right)
   \le \tgamma_{m+1,\ell} 
   \le \tgamma_{m,\ell} + 2,
$$
\item[(iii)] $\tgamma_{m}$ is increasing in $m$, and moreover,
$$
   \tgamma_m + \log_2 \left( \frac{4m(m+3/2)}{(2m+1/2)(2m+3/2)} \right)
   \le \tgamma_{m+1} 
   \le \tgamma_m + 2.
$$
\end{enumerate}
\end{theorem}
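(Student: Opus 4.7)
The plan is to adapt the proof of Theorem~\ref{thm:pseudo} essentially verbatim, using the remark right after that theorem that Lemma~\ref{lem:comparison} continues to apply to the dual-type symbols because its proof only uses the scalar factor $\tilde B$, not the extra $e^{i\xi/2}$. All three parts reduce to pointwise polynomial inequalities between $\tilde B_{m,\ell}$ and $\tilde B_{m',\ell'}$ on $\xi \in [-\pi,\pi]$, which after the substitution $s = \sin^2(\xi/2) \in [0,1]$ become inequalities between polynomials in $s$ with coefficients $\binom{m-1/2+k}{k}$. The upper bounds (``$\le \tilde\gamma_{m,\ell-1}$'' and ``$\le \tilde\gamma_{m,\ell}+2$'') follow in the same way as in the primal case, from the obvious pointwise inequalities $\tilde B_{m,\ell-1} \le \tilde B_{m,\ell}$ and $\tilde B_{m,\ell} \le \tilde B_{m+1,\ell}$ with $C=1$ and the appropriate swap of roles in Lemma~\ref{lem:comparison}.

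The single technical ingredient that drives all the \emph{lower} bounds is the half-integer ratio
\[ \frac{\binom{m+1/2+k}{k}}{\binom{m-1/2+k}{k}} = \frac{m+1/2+k}{m+1/2}, \]
the analogue of $(m+k)/m$ used in the primal case (verified by writing the binomials via $\Gamma$ and telescoping), together with the hockey-stick identity $\sum_{k=0}^{n} \binom{a+k}{k} = \binom{a+n+1}{n}$, which holds for any real $a$. With these in hand, Parts~(i) and~(ii) are direct translations: in Part~(i) both symbols have the same $r = 2m$, the polynomial $p(s) = C\tilde B_{m,\ell-1}(s) - \tilde B_{m,\ell}(s)$ splits as $\sum_{k=0}^{\ell-1} c_k(s^k - s^\ell) + c_\ell s^\ell$ with $c_k = (C-1)\binom{m-1/2+k}{k}$, and the hockey-stick identity reduces $c_\ell \ge 0$ to $C \ge (m+\ell+1/2)/\ell$; in Part~(ii) one takes $r=2m$, $\tilde r = 2m+2$, the polynomials share the same degree, and the ratio identity above reduces the pointwise inequality directly to the condition $C \ge (m+\ell+1/2)/(m+1/2)$, which together with $\tilde r - r = 2$ yields exactly $\log_2\bigl(4(m+1/2)/(m+\ell+1/2)\bigr)$.

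Part~(iii) is where the only minor obstacle lies. Again $r = 2m$ and $\tilde r = 2m+2$, but $\tilde B_{m,m-1}$ and $\tilde B_{m+1,m}$ now have different degrees $m-1$ and $m$, so as in Part~(iii) of Theorem~\ref{thm:pseudo} I would split
\[ p(s) = \sum_{k=0}^{m-1} c_k(s^k - s^m) + c_m s^m, \]
with $c_k = C\binom{m-1/2+k}{k} - \binom{m+1/2+k}{k}$ for $k \le m-1$ and $c_m = \sum_{k=0}^{m-1} c_k - \binom{2m+1/2}{m}$. The ratio identity yields $c_{m-1}\ge 0 \iff C \ge (4m-1)/(2m+1)$, while computing $c_m$ via the hockey-stick identity together with the half-integer Pascal relation $\binom{2m+1/2}{m-1}+\binom{2m+1/2}{m}=\binom{2m+3/2}{m}$ yields $c_m \ge 0 \iff C \ge (2m+1/2)(2m+3/2)/[m(m+3/2)]$. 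The only substantive check is a short polynomial cross-multiplication showing that the latter constraint dominates the former for every $m \ge 1$; this produces exactly the constant needed to give the stated lower bound $\log_2\left(4m(m+3/2)/[(2m+1/2)(2m+3/2)]\right)$.
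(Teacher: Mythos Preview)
Your proposal is correct and follows essentially the same route as the paper's proof: the paper also reduces each part to the corresponding part of Theorem~\ref{thm:pseudo} with the half-integer binomials $\binom{m-1/2+k}{k}$, arriving at exactly your constants $C=(m+\ell+1/2)/\ell$, $C=(m+\ell+1/2)/(m+1/2)$, and, via $c_m=C\binom{2m-1/2}{m-1}-\binom{2m+3/2}{m}$, the constant $C=(2m+1/2)(2m+3/2)/\bigl(m(m+3/2)\bigr)$. Your explicit invocation of the hockey-stick and Pascal identities and the final dominance check are spelled out in more detail than in the paper, but the argument is the same.
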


\begin{proof}
The proof of Part (i) is similar to that of
Theorem~\ref{thm:pseudo} but with $C$ replaced by $(m+\ell+1/2)/\ell$.
Part (ii) is also similar to that of
Theorem~\ref{thm:pseudo} but with $C$ replaced by $(m+\ell+1/2)/(m+1/2)$.
Part (iii) is again similar, but we now have $c_k \ge 0$ for
$0 \le k \le m-1$ if $C \ge (2m-1/2)/(m+1/2)$, and
$$ c_m = C \binom{2m-1/2}{m-1} - \binom{2m+3/2}{m} \ge 0 $$
if $C \ge (2m+1/2)(2m+3/2)/\big(m(m+3/2)\big)$.
\end{proof}

Finally, we compare the regularities of the
primal and dual pseudo-splines.

\begin{theorem}\label{thm:primaldual}
For $m \ge 1$ and $0 \le \ell \le m-1$,
\begin{align}
   \gamma_{m,\ell} + \log_2 \left(2 \prod_{n=0}^{l-1} \frac{m + n}{m + 1/2 + n}\right)
   & \le \tgamma_{m,\ell} 
   \le \gamma_{m,\ell} + 1,\label{eq:primaldual1}\\
   \tgamma_{m,\ell} + \log_2 \left(2 \prod_{n = 0}^{l-1} \frac{m + 1/2 + n}{m + 1 + n} \right)
   & \le \gamma_{m+1,\ell} 
   \le \tgamma_{m,\ell} + 1,\label{eq:primaldual2}
\end{align}
where an empty product is understood to mean 1.
\end{theorem}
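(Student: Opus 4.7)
The plan is to derive all four inequalities in Theorem~\ref{thm:primaldual} from four applications of Lemma~\ref{lem:comparison}, using the extension (valid when either symbol carries an extra $e^{i\xi/2}$ factor) noted just before Theorem~\ref{thm:dualpseudo}. From the factorizations
\[
 A_{m,\ell}(\xi) = 2\cos^{2m}(\xi/2)\,B_{m,\ell}(\xi), \qquad
 \tilde A_{m,\ell}(\xi) = 2e^{i\xi/2}\cos^{2m+1}(\xi/2)\,\tilde B_{m,\ell}(\xi),
\]
the value `$r$' to be plugged into Lemma~\ref{lem:comparison} is $2m-1$ for the primal index-$m$ scheme and $2m$ for the dual index-$m$ scheme. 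Each of the four comparisons therefore involves a shift $\tilde r - r = \pm 1$, and this shift accounts for the extra $\log_2 2$ appearing inside the logarithms in the theorem.

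Setting $s = \sin^2(\xi/2) \in [0,1]$, both $B_{m,\ell}$ and $\tilde B_{m,\ell}$ become polynomials in $s$ of degree $\ell$ with coefficients $\binom{m-1+k}{k}$ and $\binom{m-1/2+k}{k}$ respectively. A one-line check gives the chain
\[
  \binom{m-1+k}{k} \le \binom{m-1/2+k}{k} \le \binom{m+k}{k} \qquad (k \ge 0),
\]
from which I read off the pointwise bounds $B_{m,\ell} \le \tilde B_{m,\ell} \le B_{m+1,\ell}$ on $[-\pi,\pi]$. Feeding each into Lemma~\ref{lem:comparison} with constant $C = 1$, and orienting the lemma so that its `$\tilde A$' is the scheme with the smaller $r$, immediately yields the upper bounds $\tilde\gamma_{m,\ell} \le \gamma_{m,\ell} + 1$ and $\gamma_{m+1,\ell} \le \tilde\gamma_{m,\ell} + 1$.

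For the two lower bounds I would establish reverse estimates $\tilde B_{m,\ell} \le C\, B_{m,\ell}$ and $B_{m+1,\ell} \le C'\, \tilde B_{m,\ell}$. Since the relevant coefficient ratios
\[
  \frac{\binom{m-1/2+k}{k}}{\binom{m-1+k}{k}} = \prod_{j=1}^{k}\frac{m+j-1/2}{m+j-1}, \qquad
  \frac{\binom{m+k}{k}}{\binom{m-1/2+k}{k}} = \prod_{j=1}^{k}\frac{m+j}{m+j-1/2}
\]
are both increasing in $k$, a sufficient choice is to take $C$ and $C'$ equal to their values at $k = \ell$, namely $C = \prod_{n=0}^{\ell-1}(m+1/2+n)/(m+n)$ and $C' = \prod_{n=0}^{\ell-1}(m+1+n)/(m+1/2+n)$. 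With these values every coefficient of $C B_{m,\ell} - \tilde B_{m,\ell}$ and of $C'\tilde B_{m,\ell} - B_{m+1,\ell}$ is non-negative, so the required pointwise bound holds on $s \in [0,1]$. Substituting into Lemma~\ref{lem:comparison} and rewriting $1 - \log_2 C = \log_2(2/C)$ (and similarly for $C'$) reproduces exactly the two lower bounds displayed in the theorem. The only real subtlety is bookkeeping: in each of the four applications I must track which scheme plays $A$ and which plays $\tilde A$, so that the shift $\tilde r - r$ picks up the correct sign and combines with $\log_2 C$ into the $\log_2(2\,\prod\cdots)$ of the statement; once this orientation is fixed, everything reduces to the monotonicity in $k$ of ratios of consecutive binomial coefficients, which is elementary.
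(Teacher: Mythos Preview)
Your proposal is correct and follows essentially the same approach as the paper: both proofs apply Lemma~\ref{lem:comparison} (in its extended form allowing an $e^{i\xi/2}$ factor) four times, using the pointwise inequalities $B_{m,\ell}\le\tilde B_{m,\ell}\le B_{m+1,\ell}$ with $C=1$ for the upper bounds, and choosing $C$ (resp.\ $C'$) as the maximal coefficient ratio $\prod_{n=0}^{\ell-1}(m+1/2+n)/(m+n)$ (resp.\ $\prod_{n=0}^{\ell-1}(m+1+n)/(m+1/2+n)$) for the lower bounds. Your phrasing in terms of the monotonicity in $k$ of the ratio $\binom{m-1/2+k}{k}\big/\binom{m-1+k}{k}$ is a slightly cleaner way to see why the $k=\ell$ term dominates, but substantively it is the same argument.
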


\begin{proof}
We only prove \eqref{eq:primaldual1}, since the proof of \eqref{eq:primaldual2} is similar.
We apply Lemma~\ref{lem:comparison}
with $r=2m-1$ and $\tilde r = 2m$, in which case $\tilde r - r = 1$.
Since $\tilde B_{m,\ell}(\xi) \ge B_{m,\ell}(\xi)$ for
$\xi \in [-\pi, \pi]$, the lemma implies the second inequality
in \eqref{eq:primaldual1}.
To prove the first inequality in \eqref{eq:primaldual1}
we look for a constant $C \ge 1$ such that
\begin{equation}\label{eq:pd}
 \tilde B_{m,\ell}(\xi) \le C B_{m,\ell}(\xi), \qquad \xi \in [-\pi, \pi],
\end{equation}
or equivalently, such that
$$ p(s) := \sum_{k=0}^\ell c_k s^k \ge 0,
      \qquad 0 \le s \le 1, $$
where
$$ c_k = C \binom{m-1+k}{k} - \binom{m-1/2+k}{k}. $$
For any $k = 0, \ldots, l$, one has $c_k \ge 0$ if and only if
\[ C \geq \left.\binom{m-1/2+k}{k} \right/\binom{m-1+k}{k} = \prod_{n=0}^{k-1} \frac{m + 1/2 + n}{m + n}. \]
So \eqref{eq:pd} holds if we take
\[ C = \prod_{n=0}^{l-1} \frac{m + 1/2 + n}{m + n},\]
and applying Lemma~\ref{lem:comparison} gives the first inequality
in \eqref{eq:primaldual1}.
\end{proof}

\section*{Acknowledgments}
We wish to thank Maria Charina and Nira Dyn for helpful discussions.

%%%%%%%%%%%%%%%%%%%%%%%%%%%%%%%%%%%%%%%%%%%%%%%%%%%%%%%%%%%%%%%%%%%%%%%%%%%%
\bibliographystyle{plain}

\begin{thebibliography}{10}

\bibitem{DGS}
I. Daubechies, I. Guskov, and W. Sweldens,
Regularity of irregular subdivision,
Constr. Approx. {\bf 15} (1999), 381--426.

\bibitem{DHRS}
I. Daubechies, B. Han, A. Ron, Z. Shen,
Framelets: MRA-based constructions of wavelet frames,
Appl. Comput. Harmon. Anal. {\bf 14} (2003), no. 1, 1--46. 

\bibitem{DD}
G. Deslauriers and S. Dubuc, Symmetric iterative interpolation
pro\-cesses, Constr. Approx. {\bf 5}\ (1989), 49--68.

\bibitem{DDH}
B. Dong, N. Dyn, and K. Hormann,
Properties of dual pseudo-splines,
Appl. Comp. Harm. Anal. {\bf 29} (2010), 104--110.

\bibitem{DHSS}
N. Dyn, K. Hormann, M. A. Sabin, and Z. Shen,
Polynomial reproduction by symmetric subdivision schemes,
J. Approx. Theory {\bf 155} (2008), 28--42.

\bibitem{DS}
B. Dong and Z. Shen,
Linear independence of pseudo-splines,
Proc. Amer. Math. Soc. {\bf 134} (2006), 2685--2694.

\bibitem{DS2}
B. Dong and Z. Shen,
Pseudo-splines, wavelets and framelets,
Appl. Comput. Harmon. Anal. {\bf 22} (2007), no. 1, 78--104.

\bibitem{Dubuc}
S. Dubuc, Interpolation through an iterative scheme,
J. Math. Anal. Appl. {\bf 114} (1986), 185--204.

\bibitem{DFH04}
N. Dyn, M. S. Floater, and K. Hormann,
A $C^2$ four-point subdivision scheme with fourth order accuracy
and its extensions.
Mathematical methods for curves and surfaces: Troms\o\ 2004, 145--156.

\bibitem{DL}
N. Dyn and D. Levin, Subdivision schemes in geometric modelling,
Acta Numerica {\bf 11} (2002), 73--144.

\bibitem{JM}
R. Q. Jia, C. A. Micchelli,
Using the refinement equations for the construction of pre-wavelets.
II. Powers of two,
Curves and surfaces: Chamonix-Mont-Blanc 1990, 209--246.

\bibitem{Rioul}
O. Rioul,
Simple regularity criteria for subdivision schemes,
SIAM J. Math. Anal. {\bf 23} (1992), 1544--1576.

\end{thebibliography}

\end{document}